\title
[On nonexistence of higher dimensional horizontal Sobolev sets]
{{\bf A low rank property and nonexistence of higher dimensional horizontal Sobolev sets}}
\author{Valentino Magnani}
\address{Valentino Magnani,  Dipartimento di Matematica \\
Largo Pontecorvo 5 \\ I-56127, Pisa}
\email{magnani@dm.unipi.it}
\author{Jan Mal\'y}
\address{Jan Mal\'y, 
Department of Mathematics\\
J. E. Purkyn\v e University\\
\v Cesk\'e ml\'a\-de\v ze~8\\
400~96 \'Ust\'{\i} nad Labem, Czech Republic}
\email{jmaly@physics.ujep.cz}
\author{Samuele Mongodi}
\address{Samuele Mongodi, Scuola Normale Superiore, Piazza dei Cavalieri 7, I-56126 Pisa}
\email{s.mongodi@sns.it}
\thanks{Both the first and third authors acknowledge the support of the European Project ERC AdG *GeMeThNES*.}
\subjclass[2010]{Primary: 46E35, 53C17. Secondary: 26B20, 22E25}
\keywords{Heisenberg group, contact manifold, isotropic submanifold, Legendrian submanifold, contact equations, Hausdorff measure, Pfaffian system} 
\date{\today}
\numberwithin{equation}{section}
\newcommand{\cH}{\mathcal{H}}
\newcommand{\cL}{\mathcal{L}}
\newcommand{\cS}{\mathcal{S}}
\newcommand{\cV}{\mathcal{V}}
\renewcommand{\H}{\mathbb{H}}
\newcommand{\intl}{\int\limits}
\newcommand{\R}{\mathbb{R}}
\newcommand{\N}{\mathbb{N}}
\newcommand{\de}{\partial}
\newcommand{\loc}{\mathrm{loc}}
\newcommand{\B}{\mathbb{B}}
\newcommand{\res}{\mbox{\LARGE{$\llcorner$}}}
\newcommand{\der}{\partial}
\newcommand{\e}{\boldsymbol e}
\renewcommand{\u}{\boldsymbol u}
\newcommand{\ve}{\boldsymbol v}
\newcommand{\w}{\boldsymbol w}
\newtheorem{Lmm}{Lemma}[section]
\newtheorem{Prp}[Lmm]{Proposition}
\newtheorem{Teo}{Theorem}[section]
\theoremstyle{definition}
\newtheorem{Def}{Definition}[section]
\newtheorem{Rem}[Lmm]{Remark}
\begin{document}

\begin{abstract}
We establish a ``low rank property'' for Sobolev mappings that pointwise solve a first order nonlinear system of PDEs,
whose smooth solutions have the so-called ``contact property''. As a consequence, Sobolev mappings 
from an open set of the plane, taking values in the first Heisenberg group $\H^1$ and that have almost 
everywhere maximal rank must have images with positive 3-dimensional Hausdorff measure with respect to
the sub-Riemannian distance of $\H^1$.
This provides a complete solution to a question raised in a paper by Z. M. Balogh, R. Hoefer-Isenegger and J. T. Tyson.
Our approach differs from the previous ones. Its technical aspect consists in performing an 
``exterior differentiation by blow-up'', where the standard distributional exterior differentiation is not possible.
This method extends to higher dimensional Sobolev mappings, taking values in higher dimensional Heisenberg groups.
\end{abstract}

\maketitle

\tableofcontents

\pagebreak

\section{Introduction}

It is well known that every noninvolutive tangent distribution on a manifold does not admit any integral submanifold. 
One of the simplest cases is given by the nonintegrable tangent distribution of the first Heisenberg group $\H^1$, 
identified by $\R^3$ with coordinates associated to the left invariant vector fields
\begin{equation}\label{heisv}
 X_1=\de_{x_1}-x_2\de_{x_3}  \quad\mbox{and}\quad X_2=\de_{x_2}+x_1\de_{x_3}\,.
\end{equation}
At each point of the space, these vector fields linearly span a subspace of the tangent space, hence a tangent distribution is defined, corresponding to the so-called ``horizontal subbundle''. 
Although no smooth surfaces in $\H^1$ can be everywhere tangent to $H\H^1$, one may still wonder whether 
there exist more general ``2-dimensional sets'' that can be still considered ``tangent'' to this distribution
in a broad sense. This problem is amazingly related to the study of the Hausdorff dimension of sets with respect to
the sub-Riemannian distance, in short SR-distance, that is associated to $H\H^1$.

In this connection, Z. M. Balogh and J. T. Tyson have constructed an interesting example of
``horizontal fractal'', called the {\em Heisenberg square} $Q_H$, \cite{BalTys05}. 
The 2-dimensional Hausdorff measure of $Q_H$ with respect to both the SR-distance and the Euclidean distance 
is finite and positive, see \cite[Theorem~1.10]{BalTys05}. As proved in \cite{BHIT}, it is possible to find a 
BV function $g:(0,1)^2\to\R$, whose graph $G$ is contained in $Q_H$ and satisfies
\begin{equation}\label{hormet2}
0<\cH_d^2(G)<+\infty.
\end{equation}
The symbol $\cH^2_d$ denotes the Hausdorff measure with respect to the SR-distance $d$ of $\H^1$. 
Condition \eqref{hormet2} never holds for graphs of smooth functions. It can be interpreted 
as a ``metric definition'' of horizontality for lower regular sets. 
In fact, in the general Heisenberg group $\H^n$, represented by $\R^{2n+1}$ equipped by the left invariant vector fields 
\begin{equation}\label{Heisnvf}
 X_i=\de_{x_i}-x_i\de_{x_{2n+1}},\quad X_{n+i}=\de_{x_{n+i}}+x_i\de_{x_{2n+1}}
\quad \mbox{and}\quad i=1,\ldots,n,
\end{equation}
spanning $H\H^n$, every $C^1$ smooth $m$-dimensional submanifold $\Sigma\subset\H^n$
that is everywhere tangent to $H\H^n$ must have the measure $\cH^m_d\res\Sigma$ locally finite.
On the other hand, from Contact Topology, it is well known that 
the nonintegrability of $H\H^n$ is stronger than the noninvolutivity condition of Frobenius Theorem,
since not only hypersurfaces but rather all sufficiently smooth submanifolds $\Sigma\subset\H^n$ of dimension $m$,
with $n<m\leq 2n$, cannot be everywhere tangent to $H\H^n$, in short $T\Sigma\nsubseteq H\H^n$, see for instance \cite[Proposition~1.5.12]{Geiges2008}. 
Thus, when $m>n$ there must exist at least a point $x\in\Sigma$ such that $T_x\Sigma\nsubseteq H_x\H^n$.

This fact has an important metric implication, since the density of $\cH^{m+1}_d\res\Sigma$ 
with respect to the Euclidean surface measure $\cH^m_{|\cdot|}\res\Sigma$ is proportional to the length of the ``vertical tangent $m$-vector'' $\tau_{\Sigma,\cV}$ and this vector vanishes only at those points
$x\in\Sigma$, called {\em horizontal points}, that are characterized by the condition
$T_x\Sigma\subset H_x\H^n$. 

When $\Sigma$ is $C^1$ smooth, the absolute continuity of $\cH^{m+1}_d\res\Sigma$ with respect to $\cH^m_{|\cdot|}$ is mainly a consequence of a higher codimensional negligibility result, \cite{Mag6}, joined with a blow-up at nonhorizontal points, 
\cite{FSSC6, Mag11, Mag12A}. 
The $m$-vector $\tau_{\Sigma,\cV}$ is defined as the projection of the unit tangent $p$-vector of $\Sigma$ onto the orthogonal subspace to the linear space $\Lambda_m(H\H^n)$ of horizontal $m$-vectors, see \cite{Mag11} for more details and related references.
Such results imply that for each smooth $m$-dimensional submanifolds $\Sigma\subset\H^n$ with $m>n$, there holds
\begin{equation}\label{S^m+1}
\cH_d^{m+1}(\Sigma)>0\,.
\end{equation}
In the case $n=1$ and $m=2$, the non-horizontality condition \eqref{S^m+1} for nonsmooth sets 
has been shown in \cite{BHIT}, where $\Sigma$ is a 2-dimensional Lipschitz graph of $\H^1$. 
Here the authors raise the interesting question on the existence of horizontal sets in the sense of \eqref{hormet2} having 
regularity between Lipschitz and BV. 

A first answer to this question is given in \cite{Mag17}, where it is shown that 2-dimensional $W_{\loc}^{1,1}$ 
Sobolev graphs $\Sigma$ in $\H^1$ have to satisfy \eqref{S^m+1}, with $m=2$. 
This approach relies on the fact that for a smooth local parametrization $f:\Omega\to\Sigma$, where $\Omega\subset\R^2$, 
the equation
\begin{equation}\label{eq_horiz}
df^3=f^1df^2-f^2df^1
\end{equation}
only holds at those points $y\in\Omega$ such that $T_{f(y)}\Sigma\subset H\H^1$ and \eqref{eq_horiz}
cannot hold everywhere, since its exterior differentiation would imply that the rank of $Df$ is 
everywhere less than two. 
To see this fact when $f\in W^{1,1}_{\loc}(\Omega,\H^1)$ and it is defined by the graph of a
real-valued Sobolev function, the point is to show that the
almost everywhere validity of \eqref{eq_horiz} allows us to take its distributional exterior differential, 
obtaining that the rank of $Df$ cannot be almost everywhere maximal and this conflicts with the graph structure. 
This is the key to establish \eqref{S^m+1}, since the previous argument shows that \eqref{eq_horiz} fails to hold at least
on a set of positive measure and the Whitney extension theorem yields a $C^1$ smooth submanifold $\tilde\Sigma$
that coincides with the Sobolev graph $\Sigma$ on some measurable subset $A\subset\tilde\Sigma\cap\Sigma$
of positive Euclidean surface measure, where in addition $TA\nsubseteq H\H^n$. 
As a consequence, in view of the previous comments on the density of $\cH^3_d\res\tilde\Sigma$, we achieve
\[
\cH^3_d(\Sigma)\geq\cH^3_d(A)>0\,.
\]
More generally, the same argument applies to all cases where we are able to show that \eqref{eq_horiz} cannot hold almost everywhere. To show this fact in other cases of low regular sets, we need the summability of both $f$ and $Df$ to allow for the distributional exterior differentiation of \eqref{eq_horiz}. The distributional exterior differential of $f^1df^2-f^2df^1$ is
exactly twice the distributional Jacobian of the mapping $(f^1,f^2)$, hence assuming for instance 
that $(f^1,f^2)\in W_{\loc}^{1,p}(\Omega,\R^2)$ with $p\geq 4/3$, we obtain that this distributional Jacobian is well defined.
As a consequence, every image $\Sigma$ of a mapping in $W^{1,p}_{loc}(\Omega,\R^3$ with
$p\geq 4/3$ and whose Jacobian matrix has almost everywhere maximal rank must satisfy \eqref{S^m+1} with $m=2$, \cite{Mag17}. The validity of this result in the case $1\leq p< 4/3$ was left open, since the distributional Jacobian cannot be defined. The following theorem answers this question.
\begin{Teo}\label{m=2}
Let $\Omega\subset\R^2$ be open, let $f\in W_{\loc}^{1,1}(\Omega,\R^3)$ be such that
the Jacobian matrix $Df$ has almost everywhere maximal rank and define $\Sigma=f(\Omega)$. 
It follows that $\cH_d^3(\Sigma)>0$.
\end{Teo}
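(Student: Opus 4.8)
The plan is to reduce the theorem to a \emph{low rank property} for Sobolev solutions of the contact equations and then to prove the latter by carrying out an exterior differentiation after a blow-up, where the standard distributional exterior differentiation is not available. The low rank property reads: if $f\in W^{1,1}_{\loc}(\Omega,\R^3)$ satisfies $\de_j f^3=f^1\de_j f^2-f^2\de_j f^1$, $j=1,2$, at a.e.\ point, then $\rk Df<2$ a.e.\ in $\Omega$. Granting this, argue by contradiction, assuming $\cH^3_d(\Sigma)=0$. Let $E$ be the set of $y\in\Omega$ at which $f$ is approximately differentiable (a full measure set, since $f\in W^{1,1}_{\loc}$), $\rk Df(y)=2$, and at least one contact equation fails. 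If $|E|>0$, a Whitney-type approximation provides a compact $K\subseteq E$ with $|K|>0$ and $g\in C^1(\R^2,\R^3)$ with $f=g$ and $Df=Dg$ on $K$; shrinking $K$, we may assume $g$ is a $C^1$ embedding on an open $U\supseteq K$. Then $g(U)$ is a $C^1$ surface which meets $\Sigma$ along $g(K)=f(K)$ and whose tangent plane $\mathrm{Im}\,Dg=\mathrm{Im}\,Df$ is nonhorizontal at each point of $g(K)$; since the density of $\cH^3_d\res g(U)$ with respect to $\cH^2_{|\cdot|}\res g(U)$ is proportional to the length of the vertical tangent $2$-vector, it is positive on $g(K)$, so $\cH^2_{|\cdot|}(g(K))>0$ gives $\cH^3_d(\Sigma)\geq\cH^3_d(g(K))>0$, a contradiction. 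Hence $|E|=0$, so the contact equations hold a.e., and the low rank property then contradicts $\rk Df=2$ a.e.; thus $\cH^3_d(\Sigma)>0$.

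It remains to prove the low rank property, so let $f\in W^{1,1}_{\loc}(\Omega,\R^3)$ solve the contact equations a.e. The first observation is that these equations force the a priori only $L^{2/3}_{\loc}$ one-form $\omega:=f^1\,df^2-f^2\,df^1$ to coincide a.e.\ with $df^3\in L^1_{\loc}$; hence $\omega\in L^1_{\loc}$ and, since $d(df^3)=0$ in $\mathcal{D}'$ for any $W^{1,1}_{\loc}$ function (by mollification), the distributional Jacobian $\tfrac12 d\omega$ of $(f^1,f^2)$ is well defined and vanishes. This closedness survives a blow-up: fix $y_0$ in the full measure set of points at which $f$ is approximately differentiable and $Df$ has a Lebesgue value $L=Df(y_0)$, set $a_i=f^i(y_0)$ and $f_r(z)=\bigl(f(y_0+rz)-f(y_0)\bigr)/r$. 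Substituting $f(y_0+rz)=f(y_0)+r f_r(z)$ into the contact equations at $y_0+rz$ yields, for every $r>0$,
\[
d\bigl(f_r^3-a_1 f_r^2+a_2 f_r^1\bigr)=r\,\omega_r,\qquad \omega_r:=f_r^1\,df_r^2-f_r^2\,df_r^1,
\]
whose left-hand side is an exact $L^1_{\loc}$ form; hence $\omega_r\in L^1_{\loc}$ and $d\omega_r=0$ in $\mathcal{D}'$.

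As $r\to0$, the choice of $y_0$ gives $f_r^i\to\ell^i$ in $L^q_{\loc}$ for every $q<2$ and $df_r^i\to d\ell^i$ in $L^1_{\loc}$, with $\ell^i(z)=L^i_1 z_1+L^i_2 z_2$; so the natural candidate limit of $\omega_r$ is the polynomial one-form $\omega_0:=\ell^1\,d\ell^2-\ell^2\,d\ell^1$. If one can prove $\omega_r\to\omega_0$ in $\mathcal{D}'$, then $d\omega_0=\lim_r d\omega_r=0$ by continuity of $d$; but $d\omega_0=2\,d\ell^1\wedge d\ell^2=2\bigl(L^1_1 L^2_2-L^1_2 L^2_1\bigr)\,dz_1\wedge dz_2$ has constant coefficient, so $L^1_1 L^2_2-L^1_2 L^2_1=0$, i.e.\ the pointwise Jacobian determinant of $(f^1,f^2)$ vanishes at $y_0$. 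On the other hand the contact equations at $y_0$ state that the third row of $Df(y_0)$ equals $a_1$ times the second minus $a_2$ times the first, so $\rk Df(y_0)\leq1$; as $y_0$ ranges over a full measure subset of $\Omega$, this is the low rank property.

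The main obstacle is exactly the distributional convergence $\omega_r\to\omega_0$, and it is precisely where the range $1\leq p<4/3$ differs from the case $p\geq 4/3$ treated in \cite{Mag17}. With only $W^{1,1}_{\loc}$ regularity the planar Sobolev embedding gives $f_r^i\to\ell^i$ in $L^q_{\loc}$ for $q<2$ but merely $df_r^i\to d\ell^i$ in $L^1_{\loc}$, so the crossed term $(f_r^i-\ell^i)\,\de_k f_r^j$ entering $\omega_r-\omega_0$ is a product of an $L^q_{\loc}$-null factor with a factor only bounded in $L^1_{\loc}$, and is not controlled in $L^1_{\loc}$; equivalently, the forms $\omega_r$ are bounded in $L^{2/3}_{\loc}$ but, a priori, not in $L^1_{\loc}$ uniformly in $r$, so the $L^1_{\loc}$-membership valid at each fixed $r$ does not pass to the limit. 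This is exactly why the vanishing of the distributional Jacobian found above does not by itself force the pointwise Jacobian to vanish a.e. Making this ``exterior differentiation by blow-up'' rigorous at this borderline summability — presumably through a careful selection of good blow-up points, together with truncation arguments keeping the relevant closed forms in $L^1_{\loc}$ — is the technical heart of the proof.
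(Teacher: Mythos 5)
Your reduction of the theorem to the low rank property is correct and is essentially the paper's proof of Theorem~\ref{H_dm} (Whitney extension at points where the contact equation fails plus the density of $\cH^3_d$ restricted to a $C^1$ surface), and your identity
\[
d\bigl(f_r^3-a_1 f_r^2+a_2 f_r^1\bigr)=r\,\omega_r ,\qquad \omega_r=f_r^1\,df_r^2-f_r^2\,df_r^1,
\]
is exactly the paper's exactness lemma (Lemma~\ref{lmm_exactness}). But the proof is not complete: the step you yourself flag as ``the technical heart'' --- passing from $d\omega_r=0$ to $d\omega_0=0$, i.e.\ the convergence $\omega_r\to\omega_0$ in $\mathcal{D}'$ --- is precisely the point where the $W^{1,1}$ case differs from $p\ge 4/3$, and you leave it unproved, with only a vague suggestion (``careful selection of good blow-up points, together with truncation arguments''). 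As you correctly observe, the crossed terms $(f_r^j-\ell^j)\,\de_k f_r^i$ are a product of a factor converging to $0$ only in $L^q_{\loc}$, $q<2$, with a factor merely bounded in $L^1_{\loc}$, so no uniform $L^1$ bound on $\omega_r$ is available; moreover a truncation of $f_r$ would destroy the exactness structure $\omega_r=dw_r$ on which your argument relies, so it is not clear that your proposed fix can work. In short, the argument stops exactly where the new idea is needed.

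The paper closes this gap not by proving two-dimensional distributional convergence of $\omega_r$, but by descending to one-dimensional slices: choose a sequence $\rho_h\searrow 0$ along which the rescalings converge \emph{fast} in $W^{1,1}(\B)$ to the linear map $u(y)=Df(z)\cdot y$; then for a.e.\ radius $s$ the restrictions to the circle $\partial B(0,s)$ lie in $W^{1,1}$ of the circle, the one-dimensional Sobolev embedding upgrades the convergence of $f_{z,\rho_h}^j$ on that circle to uniform convergence, and the derivatives converge in $L^1$ of the circle, so the oriented integrals $\int_{\partial B(0,s)}\bigl(f^1_{z,\rho_h}\,df^2_{z,\rho_h}-f^2_{z,\rho_h}\,df^1_{z,\rho_h}\bigr)$ converge to $\int_{\partial B(0,s)}\bigl(u^1\,du^2-u^2\,du^1\bigr)$ (Lemma~\ref{lmm_passage2}). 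Each integral in the sequence vanishes because the form equals $dw_h$ with $w_h\in W^{1,1}(\B)$, and the oriented integral of $dw_h$ over a.e.\ circle is zero by periodicity of the absolutely continuous representative (Lemma~\ref{l_stokes}); note this needs no bound on $w_h$, which indeed need not stay bounded as $\rho_h\to 0$. Stokes' theorem applied to the smooth linear limit $u$ then yields $\det\bigl(\nabla f^1(z),\nabla f^2(z)\bigr)=0$, and together with the pointwise contact equation at $z$ this gives $\rk Df(z)\le 1$, which is the low rank property your argument requires.
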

This completes the answer to the previously mentioned question raised in \cite{BHIT}.
Our approach differs from the previous ones and it can be applied to every Heisenberg group $\H^n$,
that we identify with $\R^{2n+1}$ as a linear space.
We consider $f:\Omega\to\R^{2n+1}$, where $\Omega$ is an open set of $\R^m$. In this case, the 
horizontality condition for $f$ is given by the equation
\begin{equation}\label{eq_horizn}
df^{2n+1}=\sum_{j=1}^n \big(f^jdf^{j+n}-f^{j+n}df^j\big)\,.
\end{equation}
The previous arguments apply if we are able to show that the almost everywhere validity of 
\eqref{eq_horizn} implies a {\em low rank property}, namely,
$Df$ must have rank less than $n+1$
almost everywhere in $\Omega$. Clearly, we will apply such a result in the nontrivial case $n+1\leq m\leq 2n$. 
We will assume that $f\in  W^{1,1}_{\loc}(\Omega,\R^{2n+1})$.
Let us summarize the main idea of the proof. First, assume that $m=2$.
We perform a kind of ``exterior differentiation by blow-up'', rescaling $f$ at Lebesgue points $z\in\Omega$
of both $f$ and $D f$. The rescaled functions $f_{z,\rho}$, introduced in Definition~\ref{rescaled}, are defined 
on the unit ball $\B$ of $\R^2$ for all $\rho>0$ sufficiently small and converge
to the linear mapping
$u:y\mapsto Df(z)\cdot y$ in $W^{1,1}(\B)$ as $\rho\to 0_+$.
The almost everywhere pointwise validity of \eqref{eq_horizn} implies that the one-form
\begin{equation}\label{exactform}
\sum_{j=1}^n \Big(f^j_{z,\rho}\,df_{z,\rho}^{j+n}-f^{j+n}_{z,\rho}\,df_{z,\rho}^j\Big)
\end{equation}
is ``weakly exact'' in the sense that it is a.e.\ equal to $dw_\rho$ for some $w_\rho\in W^{1,1}(\B)$, see Lemma~\ref{lmm_exactness}.
We exploit this fact by integrating \eqref{exactform} on the Euclidean sphere $\de B(0,r)$ for almost every $r\in(0,1)$
and pass to the limit with  respect to $\rho$ as it goes to zero by a suitable positive infinitesimal sequence $(\rho_k)$. 
%Only at this step, we need the condition \eqref{which_p}
%  -- I think we need not to emphasize so much.
%
%
Since the blow-up limit has the form 
$$
\sum_{j=1}^n \Big(u^j\,du^{j+n}-u^{j+n}\,du^j\Big)
$$
with $u(y)=Df(z)\cdot y$, we obtain that its oriented integral on almost every sphere vanishes, 
hence the Stokes theorem implies that 
\begin{equation}\label{df_j}
\sum_{j=1}^ndf^j(z)\wedge df^{j+n}(z)=0\,.
\end{equation}
Now, if $m>2$, we obtain \eqref{df_j} by a slicing argument, so that
the whole range $m\ge 2$ is provided.
We will deduce from \eqref{df_j} that the 
rank of $Df(z)$ is less than $n+1$, 
so this rank condition holds almost everywhere, eventually leading us to our Theorem~\ref{teo_lowrank}.
According to this theorem, Sobolev mappings that satisfy the horizontality condition \eqref{eq_horizn} 
almost everywhere must satisfy a ``low rank property''.
This fact should be seen somehow as a ``differential obstruction''. 
It is worth to compare this obstruction with the ``Lipschitz obstructions'' appearing in the study of
Lipschitz homotopy groups of the Heinsenberg group, \cite{DJHLT}.
The main application of Theorem~\ref{teo_lowrank} is the following result.

\begin{Teo}\label{H_dm}
Let $\Omega\subset\R^m$ be an open set, let $n<m\leq 2n$ and let $f\in W_{\loc}^{1,1}(\Omega,\R^{2n+1})$.
Suppose that the Jacobian matrix $Df$ has rank equal to $m$ almost everywhere and set $\Sigma=f(\Omega)$. 
Then $\cH^{m+1}_d(\Sigma)>0$.
\end{Teo}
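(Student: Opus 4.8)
The plan is to deduce the theorem from the low rank property of Theorem~\ref{teo_lowrank}, combined with the absolute continuity estimate for $C^1$ submanifolds recalled around \eqref{S^m+1}, the bridge between the two being a Whitney extension argument of the kind already sketched in the Introduction for the case $m=2$, $n=1$. First I would note that the horizontality equation \eqref{eq_horizn} cannot hold $\cL^m$-almost everywhere in $\Omega$: otherwise Theorem~\ref{teo_lowrank} would give $\rk Df<n+1$ almost everywhere, contradicting the hypotheses that $\rk Df=m$ a.e.\ and $m\ge n+1$. Hence \eqref{eq_horizn} fails on a Borel set $E\subseteq\Omega$ of positive Lebesgue measure.

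Next I would read this failure geometrically. The map $f\in W^{1,1}_{\loc}(\Omega,\R^{2n+1})$ is approximately differentiable $\cL^m$-a.e., with approximate differential $Df$; and, at a point $z$ of approximate differentiability, equation \eqref{eq_horizn} holds exactly when every column of $Df(z)$ is annihilated by the contact form of $\H^n$ at $f(z)$, i.e.\ when $\operatorname{Im}Df(z)\subseteq H_{f(z)}\H^n$. Therefore at $\cL^m$-a.e.\ point of $E$ the approximate tangent $m$-plane of $f$ at $f(z)$ is not horizontal, while $\rk Df(z)=m$. By the $C^1$-Lusin property of Sobolev maps together with the Whitney extension theorem, after discarding a set of sufficiently small measure we obtain $g\in C^1(\R^m,\R^{2n+1})$ with $g=f$ and $Dg=Df$ on a set $A\subseteq E$ of positive Lebesgue measure. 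Fixing a Lebesgue density point $z_0$ of $A$, the matrix $Dg(z_0)=Df(z_0)$ has rank $m$, so on a small enough open neighbourhood $U$ of $z_0$ the map $g$ is a $C^1$ embedding; hence $\tilde\Sigma:=g(U)$ is an embedded $C^1$ submanifold of $\R^{2n+1}$ of dimension $m$, and $A_0:=A\cap U$ still has positive Lebesgue measure.

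It then remains to conclude. The area formula applied to $g|_U$, whose Jacobian $\sqrt{\det(Dg^{\mathrm{T}}Dg)}$ is strictly positive on $U$, yields $\cH^m_{|\cdot|}(g(A_0))>0$. For every $z\in A_0$ the plane $T_{g(z)}\tilde\Sigma=\operatorname{Im}Df(z)$ is not horizontal at $g(z)$, so the vertical tangent $m$-vector $\tau_{\tilde\Sigma,\cV}$ does not vanish on $g(A_0)$; since $\cH^{m+1}_d\res\tilde\Sigma$ is absolutely continuous with respect to $\cH^m_{|\cdot|}\res\tilde\Sigma$ with density proportional to $|\tau_{\tilde\Sigma,\cV}|$, this gives $\cH^{m+1}_d(g(A_0))>0$. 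Because $g(A_0)=f(A_0)\subseteq\Sigma$, we obtain $\cH^{m+1}_d(\Sigma)>0$. Since Theorem~\ref{teo_lowrank} already incorporates the slicing argument that reduces \eqref{df_j} to the planar case, nothing further is needed to cover the whole range $n<m\le 2n$.

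The genuinely hard part of the paper, namely Theorem~\ref{teo_lowrank} and its ``exterior differentiation by blow-up'', is used here as a black box. Granting it, the main obstacle I expect in the argument above is the geometric reading step: passing from a merely $W^{1,1}$ map, whose approximate differential is defined only $\cL^m$-a.e., to an honest $C^1$ submanifold on which the smooth density estimate \eqref{S^m+1} applies, while retaining a set of positive measure on which both $\rk Df=m$ and the non-horizontality of the tangent plane survive. The remaining ingredients --- the area formula, the Whitney and $C^1$-Lusin machinery for Sobolev functions, and the cited absolute continuity result for $C^1$ submanifolds --- then combine in a routine way.
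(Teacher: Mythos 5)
Your proposal is correct and follows essentially the same route as the paper's own proof: failure of \eqref{eq_horizn} on a positive measure set via Theorem~\ref{teo_lowrank}, then a Lusin/Lipschitz truncation plus Whitney extension to produce a $C^1$ map $g$ agreeing with $f$ on a positive measure set, localization near a point where $g$ is an embedding, the area formula, and the cited density result for $\cH^{m+1}_d\res\Sigma_0$ in terms of the vertical tangent $m$-vector. The only difference is cosmetic phrasing of the Lipschitz-then-$C^1$ approximation step.
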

We remark that in the case $m=2$ and $n=1$, this theorem exactly yields Theorem~\ref{m=2}.
In ending, we wish to point out a curious observation on the graph $G$ of the BV function $g$
mentioned above, since we can translate the metric horizontality of \eqref{hormet2}
into a somehow  ``tangential condition''. In fact, as a byproduct
of our techniques, one can easily observe that the approximate differential of the graph mapping $f=(x_1,x_2,g)$ must satisfy \eqref{eq_horiz} almost everywhere, hence $\mbox{ap}\,\nabla g=(-x_2,x_1)$ almost everywhere, see Theorem~\ref{BVg}. 
This can be seen as a tangential condition in the sense of Geometric Measure Theory.

%%%%%%%%%%%%%%%%%%%%%%%%%%%%%%%%%%%%%%%%%%%%%%%%%%%%%%%%%%%%%%%%%%%%%%%%%%%%%%%%%%%%%%%%%%%%%%%%%%%%%%%%%%%%%%%%%%%%%%%
%
%
%
%
\section{Slicing}
\label{s:sl}
%
%
%
%
%%%%%%%%%%%%%%%%%%%%%%%%%%%%%%%%%%%%%%%%%%%%%%%%%%%%%%%%%%%%%%%%%%%%%%%%%%%%%%%%%%%%%%%%%%%%%%%%%%%%%%%%%%%%%%%%%%%%%%%

For the reader's convenience, in this section we recall some well known facts 
about Sobolev sections, that will be used in the subsequent part of the paper.
Let $m$ be a positive integer and denote by $(\e_1,\dots,\e_m)$ the canonical basis of $\R^m$.
If $\Gamma\subset\{1,\dots,m\}$ is a set of indices, then $V_{\Gamma}$ is 
the linear span of $\{\e_j\colon j\in\Gamma\}$
and $V_{\Gamma}^{\bot}$ is the linear span of $\{\e_j\colon j\in\{1,\dots,m\}\setminus\Gamma\}$.
We introduce the orthogonal projections
$$
\pi_{\Gamma}(x)=\sum_{j\in\Gamma}x_j\e_j\quad\mbox{and}\quad 
\hat\pi_{\Gamma}(x)=x-\pi_{\Gamma}(x) 
$$
where $x\in\R^m$, $\pi_{\Gamma}:\; \R^m\to V_{\Gamma}$ and $\hat\pi_{\Gamma}:\;\R^m\to V_{\Gamma}^{\bot}$.
Let $Q$ be an open $m$-dimensional interval in $\R^m$, namely the product of $m$ open intervals,
and fix a nonempty subset $\Gamma\subsetneq\{1,\dots,m\}$.
We define the  projected intervals
$$
Q_{\Gamma}=\pi_{\Gamma}(Q) \quad\mbox{and}\quad \hat Q_{\Gamma}=\hat\pi_{\Gamma}(Q).
$$
If $u\colon Q\to\R$ is a function and  $z\in \hat Q_{\Gamma}$, we define the \textit{section} $u^z:Q_{\Gamma}\to\R$ as
$$
u^z(y)=u(z+y),\qquad y\in Q_{\Gamma}.
$$
\begin{Def}\rm
We say that a sequence $\{u_h\}$ in a Banach space $(X,\|\cdot\|)$  converges \textit{fast} to $u\in X$,
or that it is {\em fast convergent}, if $\sum_{h=1}^{\infty}\|u_h-u\|<\infty$.
\end{Def}
We wish to point out that the fast convergence in $W^{1,1}$ is just 
the joint fast convergence in $L^1$ of functions and their gradients.
As a consequence of both Fubini's theorem and Beppo Levi's convergence theorem for series, we get the next proposition. 
\begin{Prp}\label{p:fast} 
Let $\{u_h\}\subset W^{1,1}(Q)$ be a sequence which converges 
fast to $u\in W^{1,1}(Q)$.
Then for each $k=1,\dots,m$ and for almost every $z\in\hat Q_{\Gamma}$ we have 
$u^z,(\partial_{y_k}u)^z,u_h^z,(\partial_{y_k}u_h)^z\in L^{1}(Q_\Gamma)$, $h\in\N$, further, 
$\{u_h^z\}$ converges fast to $u^z$ in $L^{1}(Q_{\Gamma})$ and
$(\partial_{y_k}u_h)^z$ converges fast to $(\partial_{y_k}u)^z$ 
in $L^{1}(Q_{\Gamma})$.
\end{Prp}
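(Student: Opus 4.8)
The plan is to derive the whole statement from Fubini's theorem together with the Beppo Levi theorem for series of nonnegative functions, as announced just before the statement. Since $Q$ is an $m$-dimensional open interval, after the orthogonal identification $\R^m=V_\Gamma\oplus V_\Gamma^\bot$ it splits as the product $Q=Q_\Gamma\times\hat Q_\Gamma$, with the Lebesgue measure on $Q$ equal to the product of the Lebesgue measures on $Q_\Gamma$ and on $\hat Q_\Gamma$; this is what makes both the sections $u^z$ and the slicing of integrals over $Q$ meaningful. Recall also that, up to equivalence of norms, $\|v\|_{W^{1,1}(Q)}$ may be taken to be $\|v\|_{L^1(Q)}+\sum_{k=1}^m\|\partial_{y_k}v\|_{L^1(Q)}$, so the hypothesis that $\{u_h\}$ converges fast to $u$ says exactly that
$$\sum_{h=1}^\infty\Big(\|u_h-u\|_{L^1(Q)}+\sum_{k=1}^m\|\partial_{y_k}u_h-\partial_{y_k}u\|_{L^1(Q)}\Big)<\infty .$$

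The key is to package all the relevant data into a single nonnegative majorant. I would set
$$\Phi:=|u|+\sum_{k=1}^m|\partial_{y_k}u|+\sum_{h=1}^\infty\Big(|u_h-u|+\sum_{k=1}^m|\partial_{y_k}u_h-\partial_{y_k}u|\Big)$$
on $Q$. Applying the Beppo Levi theorem to this series of nonnegative measurable functions, $\int_Q\Phi$ equals $\|u\|_{L^1(Q)}+\sum_{k=1}^m\|\partial_{y_k}u\|_{L^1(Q)}$ plus the series displayed above, hence $\Phi\in L^1(Q)$. By Fubini's theorem there is a Lebesgue null set $N\subset\hat Q_\Gamma$ such that $\Phi^z\in L^1(Q_\Gamma)$ for every $z\in\hat Q_\Gamma\setminus N$; enlarging $N$ by a null set if necessary — a countable union of Fubini exceptional sets, since only countably many functions are involved — we may also assume that the sections $u^z$, $(\partial_{y_k}u)^z$, $u_h^z$ and $(\partial_{y_k}u_h)^z$ are measurable on $Q_\Gamma$ for all such $z$, all $h\in\N$ and all $k$.

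Now fix $z\in\hat Q_\Gamma\setminus N$. Each of $|u^z|$, $|(\partial_{y_k}u)^z|$, $|u_h^z-u^z|$ and $|(\partial_{y_k}u_h)^z-(\partial_{y_k}u)^z|$ equals the section at $z$ of one of the nonnegative functions $|u|$, $|\partial_{y_k}u|$, $|u_h-u|$, $|\partial_{y_k}u_h-\partial_{y_k}u|$, each of which is $\le\Phi$ on $Q$; hence each of these sections is $\le\Phi^z$ on $Q_\Gamma$ and so lies in $L^1(Q_\Gamma)$, which is precisely the asserted integrability of $u^z$, $(\partial_{y_k}u)^z$, $u_h^z$ and $(\partial_{y_k}u_h)^z$. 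For the fast convergence, I would integrate over $Q_\Gamma$ the pointwise bound $\sum_{h=1}^\infty|u_h^z-u^z|=\big(\sum_{h=1}^\infty|u_h-u|\big)^z\le\Phi^z$ and exchange the sum with the integral by Beppo Levi, obtaining $\sum_{h=1}^\infty\|u_h^z-u^z\|_{L^1(Q_\Gamma)}\le\int_{Q_\Gamma}\Phi^z<\infty$; running the same computation with $\partial_{y_k}u_h-\partial_{y_k}u$ in place of $u_h-u$ gives the fast $L^1(Q_\Gamma)$-convergence of the sectioned derivatives.

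I do not expect any genuine obstacle here. The only point requiring a little care is the bookkeeping: one must make sure that a single exceptional set $N\subset\hat Q_\Gamma$ works simultaneously for all $h$, all $k$ and for both the functions and their partial derivatives — this is exactly why one collapses everything into the single $L^1$ majorant $\Phi$ before invoking Fubini — and, in addition, that $N$ can be chosen so that all the countably many sections are measurable on $Q_\Gamma$.
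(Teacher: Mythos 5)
Your argument is correct and is exactly the route the paper indicates: the paper states the proposition as a consequence of Fubini's theorem and Beppo Levi's theorem for series, and your single integrable majorant $\Phi$ with Fubini slicing and a second application of Beppo Levi is precisely that argument, spelled out with the right bookkeeping for a single exceptional null set.
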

Each $u\in W^{1,1}(Q)$ is a limit of a fast convergent sequence of smooth functions.
Applying Proposition \ref{p:fast} we obtain the following consequence.
\begin{Prp}\label{p:der} 
Let $u\in W^{1,1}(Q)$. Then for almost every $z\in\hat Q_{\Gamma}$ we have 
$u^z\in W^{1,1}(Q_{\Gamma})$ and
\begin{equation}\label{pders}
\der_{y_k}u^z=(\der_{y_k}u)^z\quad \text{a.e.\ in }Q_{\Gamma},\qquad k=1,\dots,m.
\end{equation}
\end{Prp}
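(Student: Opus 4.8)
The plan is to reduce the statement to the trivial smooth case and then transfer it to $u$ by means of Proposition~\ref{p:fast}, which already guarantees that fast convergence survives the passage to sections. First I would fix, using the density of smooth functions recalled just above, a sequence $u_h\in C^\infty(Q)\cap W^{1,1}(Q)$ converging fast to $u$ in $W^{1,1}(Q)$. For each such $u_h$ and every $z\in\hat Q_\Gamma$ the section $u_h^z$ is smooth on $Q_\Gamma$, and the chain rule gives the pointwise identity
\[
\der_{y_k}u_h^z(y)=(\der_{y_k}u_h)(z+y)=(\der_{y_k}u_h)^z(y),\qquad y\in Q_\Gamma ,
\]
so sectioning and differentiation commute along the approximating sequence.

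Next I would apply Proposition~\ref{p:fast} to $\{u_h\}$: for each $k=1,\dots,m$ there is a null set $N_k\subset\hat Q_\Gamma$ such that, for every $z\in\hat Q_\Gamma\setminus N_k$, one has $u^z,(\der_{y_k}u)^z\in L^1(Q_\Gamma)$ while $u_h^z\to u^z$ and $(\der_{y_k}u_h)^z\to(\der_{y_k}u)^z$ in $L^1(Q_\Gamma)$ (fast convergence in particular forces norm convergence). Fixing $z$ outside the null set $N:=\bigcup_k N_k$ and combining this with the previous display, I obtain $u_h^z\to u^z$ in $L^1(Q_\Gamma)$ together with $\der_{y_k}u_h^z\to(\der_{y_k}u)^z$ in $L^1(Q_\Gamma)$ for every $k$.

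Finally, since $L^1(Q_\Gamma)$ convergence implies convergence in the sense of distributions on $Q_\Gamma$, and distributional differentiation is continuous for that convergence, letting $h\to\infty$ in the two limits above yields $\der_{y_k}u^z=(\der_{y_k}u)^z$ as distributions on $Q_\Gamma$. As the right-hand side belongs to $L^1(Q_\Gamma)$, this says precisely that $u^z\in W^{1,1}(Q_\Gamma)$ and that \eqref{pders} holds a.e.\ in $Q_\Gamma$; since $z$ was arbitrary in $\hat Q_\Gamma\setminus N$, this holds for almost every $z\in\hat Q_\Gamma$.

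I do not expect a genuine obstacle here, as the argument is a routine Fubini-type reduction: the only point deserving a little care is that the exceptional null set in $\hat Q_\Gamma$ must be chosen once and for all, independently both of the index $k$ and of the approximating sequence. This is exactly why one fixes a single fast convergent sequence $\{u_h\}$ at the outset and then discards the finite union $\bigcup_k N_k$ at the end.
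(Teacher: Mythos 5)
Your proposal is correct and follows exactly the route the paper intends: approximate $u$ by a fast convergent sequence of smooth functions, apply Proposition~\ref{p:fast} to get $L^1$ convergence of the sections and of the sectioned derivatives for a.e.\ $z$, and pass to the limit distributionally using that sectioning commutes with differentiation for smooth functions. The paper only sketches this in one line, and your write-up supplies precisely the missing details, including the correct handling of the exceptional null sets.
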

Summarizing Propositions \ref{p:fast} and \ref{p:der} we obtain the following.

\begin{Prp}\label{p:fast1} 
Let $\{u_h\}\subset W^{1,1}(Q)$ be a sequence which converges 
fast to $u\in W^{1,1}(Q)$.
Then for almost every $z\in\hat Q_{\Gamma}$ we have 
$u^z,u_h^z\in W^{1,1}(Q_\Gamma)$, $h\in\N$, and
$\{u_h^z\}$ converges fast to $u^z$ in $W^{1,1}(Q_{\Gamma})$.
\end{Prp}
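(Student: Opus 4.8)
The plan is simply to stack Propositions~\ref{p:der} and \ref{p:fast} on top of each other, the only point of care being that the exceptional sets stay negligible after a countable union. First I would invoke Proposition~\ref{p:der} for the function $u$ and, separately, for each $u_h$. This produces a null set $N_0\subset\hat Q_\Gamma$ outside of which $u^z\in W^{1,1}(Q_\Gamma)$ with $\der_{y_k}u^z=(\der_{y_k}u)^z$ a.e.\ in $Q_\Gamma$ for $k=1,\dots,m$, and null sets $N_h\subset\hat Q_\Gamma$ outside of which $u_h^z\in W^{1,1}(Q_\Gamma)$ with $\der_{y_k}u_h^z=(\der_{y_k}u_h)^z$ a.e.\ in $Q_\Gamma$. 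Since $\N$ is countable, $N:=\bigcup_{h\ge 0}N_h$ is again a null subset of $\hat Q_\Gamma$.

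Next I would apply Proposition~\ref{p:fast}: there is a further null set $N'\subset\hat Q_\Gamma$ such that for every $z\in\hat Q_\Gamma\setminus N'$ all the sections $u^z,(\der_{y_k}u)^z,u_h^z,(\der_{y_k}u_h)^z$ lie in $L^1(Q_\Gamma)$, the sequence $\{u_h^z\}$ converges fast to $u^z$ in $L^1(Q_\Gamma)$, and for each $k=1,\dots,m$ the sequence $\{(\der_{y_k}u_h)^z\}$ converges fast to $(\der_{y_k}u)^z$ in $L^1(Q_\Gamma)$.

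Finally I would fix $z\in\hat Q_\Gamma\setminus(N\cup N')$ and combine the two pieces of information: by the choice of $N$ one has $\der_{y_k}u^z=(\der_{y_k}u)^z$ and $\der_{y_k}u_h^z=(\der_{y_k}u_h)^z$ a.e.\ in $Q_\Gamma$, so the $L^1$-fast convergence of the sections of the partial derivatives supplied by $N'$ reads exactly as the fast convergence of $\der_{y_k}u_h^z$ to $\der_{y_k}u^z$ in $L^1(Q_\Gamma)$. Together with the fast $L^1$-convergence $u_h^z\to u^z$, and using the remark preceding Proposition~\ref{p:fast} that fast convergence in $W^{1,1}$ is nothing but the joint fast $L^1$-convergence of functions and of their gradients, this gives that $\{u_h^z\}$ converges fast to $u^z$ in $W^{1,1}(Q_\Gamma)$. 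Since $N\cup N'$ is null, this holds for a.e.\ $z\in\hat Q_\Gamma$, which is the assertion. I do not expect any genuine obstacle here: the sole substantive remark is the countability argument ensuring that $\bigcup_h N_h$ remains negligible, and the rest is a transcription of the two preceding propositions.
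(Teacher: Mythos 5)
Your argument is correct and is exactly the route the paper takes: Proposition~\ref{p:fast1} is stated there as a direct combination (``summarizing'') of Propositions~\ref{p:fast} and \ref{p:der}, and your write-up merely makes explicit the countable union of null sets and the identification of the sectioned derivatives. No discrepancy with the paper's proof.
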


%
%
%
%
%
%
%%%%%%%%%%%%%%%%%%%%%%%%%%%%%%%%%%%%%%%%%%%%%%%%%%%%%%%%%%%%%%%%%%%%%%%%%%%%%%%%%%%%%%%%%%%%%%%%%%%%%%%%%%%%%%%%%%%%%%
%
%
%
%
\section{Oriented integration on the circle}
%
%
%
%
%%%%%%%%%%%%%%%%%%%%%%%%%%%%%%%%%%%%%%%%%%%%%%%%%%%%%%%%%%%%%%%%%%%%%%%%%%%%%%%%%%%%%%%%%%%%%%%%%%%%%%%%%%%%%%%%%%%%%%%
%
%
%
%
%
%
The idea of slicing can be also applied to behavior of Sobolev functions
on a.e.\ sphere. However, for our purposes it is enough to 
perform this analysis in $\R^2$ only, so that we will study
Sobolev spaces on circles. 
The open ball in $\R^2$ with center at $x$ and radius $r$ is 
denoted by $B(x,r)$.
\begin{Def}[Function spaces on the circle]\rm
Consider the circle $\partial B(x,r)$ and its parametrization
\begin{equation}\label{polar}
\psi(t)=(x_1+r\cos t,\,x_2+r\sin t),\qquad t\in\R.
\end{equation}
We define $\psi_-=\psi\lfloor_{(-\pi,\pi)}$ and $\psi_+=\psi\lfloor_{(0,2\pi)}$, hence 
$(\psi_+,\psi_-)$ is an oriented 
atlas of $\partial B(x,r)$. This atlas automatically defines function spaces on $\der B(x,r)$.
Let $X$ be a generic function space symbol which may refer e.g.\ to
$W^{1,p}$, $L^p$ or $C$.
We say that $u:\partial B(x,r)\to \R$ belongs to $X(\der B(x,r))$
if $u\circ\psi_-$ 
belongs to  $X((-\pi,\pi)$ and $u\circ\psi_+$ belongs to $X(0,2\pi))$.
\end{Def}
\begin{Def}[Integrable forms on the circle]\label{OrientedI}\rm 
Let us consider $u,v:\der B(x,r)\to\R$. Then the
{\em oriented integral} of the differential form $u\,dv$ is defined as follows
$$
\int_{\de B(x,r)} u\,dv=\int_{-\pi}^{\pi}
(u\circ \psi)(t)\,(v\circ \psi)'(t)\,dt,
$$
whenever this expression has a good sense, if e.g.\
$u\in L^\infty(\der B(x,r))$, $v\in W^{1,1}(\der B(x,r))$ and
$(v\circ\psi)'$ is the distributional derivative of $v\circ\psi$.
\end{Def}
The following lemma relates the fast convergence with the convergence of 
oriented integrals on spherical sections.
\begin{Lmm}\label{lmm_passage2}
Let $u,u_h,v,v_h\in W^{1,1}(B(x,\rho))$, $h\in\N$, and suppose that 
both $u_h\to u$ and $v_h\to v$  fast in $W^{1,1}(B(x,\rho))$.
Then for almost every $0<r<\rho$ 
the restrictions of $u,u_h,v,v_h$ to $\partial B(x,r)$ belong to
$W^{1,1}(\der B(x,r))$ and 
\begin{equation}\label{formconv}
\intl_{\de B(x,r)} u_h\,dv_h\to \intl_{\de B(x,r)}u\,dv\,.
\end{equation}
\end{Lmm}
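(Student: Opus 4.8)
The plan is to reduce the statement to a one-dimensional slicing fact via the polar parametrization, exactly in the spirit of Section \ref{s:sl}. First I would introduce polar coordinates $(r,t)\mapsto \psi_r(t)=(x_1+r\cos t,\,x_2+r\sin t)$ and regard the map $(r,t)\mapsto u(\psi_r(t))$ as a function on the rectangle $Q=(0,\rho)\times(-\pi,\pi)$; since the change of variables is smooth and bi-Lipschitz away from $r=0$, the fast convergence $u_h\to u$ in $W^{1,1}(B(x,\rho))$ transfers to fast convergence of the pulled-back functions in $W^{1,1}$ on every rectangle $(\varepsilon,\rho)\times(-\pi,\pi)$, and similarly for $v_h\to v$. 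Here the weight $r$ in the area element $r\,dr\,dt$ is bounded above and below on $(\varepsilon,\rho)$, so no integrability is lost. Applying Proposition \ref{p:fast1} with $\Gamma$ corresponding to the angular variable $t$, we conclude that for almost every $r\in(\varepsilon,\rho)$ the sections $t\mapsto u(\psi_r(t))$, $t\mapsto u_h(\psi_r(t))$ lie in $W^{1,1}((-\pi,\pi))$ and $u_h(\psi_r(\cdot))\to u(\psi_r(\cdot))$ fast in $W^{1,1}((-\pi,\pi))$, and likewise for $v$ and the $v_h$. Letting $\varepsilon\to 0$ through a sequence gives the conclusion for a.e.\ $r\in(0,\rho)$; by the definition of the atlas $(\psi_+,\psi_-)$ this is precisely the assertion that the restrictions belong to $W^{1,1}(\der B(x,r))$.

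Once the one-dimensional fast convergence on a.e.\ circle is in hand, the convergence \eqref{formconv} of the oriented integrals is an elementary estimate. For a fixed good radius $r$, write, using Definition \ref{OrientedI},
\begin{equation}\label{split_diff}
\intl_{\de B(x,r)} u_h\,dv_h-\intl_{\de B(x,r)} u\,dv
=\int_{-\pi}^{\pi}(u_h-u)(\psi_r(t))\,(v_h\circ\psi_r)'(t)\,dt
+\int_{-\pi}^{\pi}u(\psi_r(t))\,\big((v_h-v)\circ\psi_r\big)'(t)\,dt.
\end{equation}
The second term is bounded by $\|u\circ\psi_r\|_{L^\infty}\,\|(v_h-v)\circ\psi_r\|_{W^{1,1}}$ and tends to $0$; note $u\circ\psi_r\in W^{1,1}((-\pi,\pi))\hookrightarrow L^\infty$, so the sup-norm is finite. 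For the first term one may bound it by $\|(u_h-u)\circ\psi_r\|_{L^\infty}\,\|(v_h\circ\psi_r)'\|_{L^1}$; since $u_h\circ\psi_r\to u\circ\psi_r$ in $W^{1,1}((-\pi,\pi))$ the factor $\|(u_h-u)\circ\psi_r\|_{L^\infty}\to 0$, while $\|(v_h\circ\psi_r)'\|_{L^1}$ stays bounded because $v_h\circ\psi_r\to v\circ\psi_r$ in $W^{1,1}$. Hence both terms on the right of \eqref{split_diff} vanish in the limit, which is \eqref{formconv}.

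The only genuinely delicate point is the behavior near $r=0$: the polar change of coordinates degenerates there, so Proposition \ref{p:fast1} cannot be applied on all of $(0,\rho)\times(-\pi,\pi)$ at once. This is harmless because the statement only asks for \emph{almost every} $r$, so exhausting $(0,\rho)$ by the rectangles $(1/k,\rho)\times(-\pi,\pi)$ and taking a countable union of the (full-measure) good sets of radii produced on each piece suffices. A secondary bookkeeping issue is that the atlas has two charts $\psi_+,\psi_-$; but since the two parametrizations differ by a rotation of the $t$-variable and a choice of fundamental domain, the fast convergence in $W^{1,1}$ of one pull-back is equivalent to that of the other, and the oriented integral in Definition \ref{OrientedI} is written intrinsically via $\psi$, so no ambiguity arises. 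With these two observations the proof is complete.
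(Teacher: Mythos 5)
Your argument is correct and follows essentially the paper's own route: pass to polar coordinates, transfer the fast convergence to rectangles $(\varepsilon,\rho)\times(\text{angular interval})$ where the Jacobian weight $r$ is bounded above and below, apply the slicing Proposition~\ref{p:fast1} to get fast $W^{1,1}$ convergence on a.e.\ circle, and then combine the one-dimensional Sobolev embedding (uniform convergence of the $u_h$-factor) with $L^1$ convergence of the derivatives of the $v_h$-factor, exhausting $(0,\rho)$ to handle the degeneracy at $r=0$. The one wrinkle is your two-chart remark: fast $W^{1,1}$ convergence of the $\psi_-$ pull-back on $(-\pi,\pi)$ is not literally ``equivalent'' to that of the $\psi_+$ pull-back on $(0,2\pi)$ (a singularity at the seam point $\psi(\pi)$ would affect one chart but not the other), and the clean fix --- which is exactly what the paper does by slicing over the angular interval $(-2\pi,2\pi)$, or equivalently repeating your slicing with the rotated fundamental domain and intersecting the two full-measure sets of radii --- is a purely cosmetic adjustment to your argument.
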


\begin{proof} 
We use the polar coordinates given by $\Psi(r,t)=(x_1+r\cos t,\;x_2+r\sin t)$
and the notation $\Psi^r=\Psi(r,\cdot)$, $r\in (0,\rho)$.
First, we observe that given $w\in W^{1,1}(B(x,\rho))$,
then $w\circ\Psi$
belongs to $W^{1,1}((\delta,\rho)\times (-2\pi,2\pi))$ for each 
$\delta\in (0,\rho)$. 
The fast convergence of both $\{u_h\}$ and $\{v_h\}$ in $W^{1,1}(B(x,r))$ implies that
$u_h\circ\Psi$ and $v_h\circ\Psi$ are fast convergent in $W^{1,1}\big((\delta,\rho)\times(-2\pi,2\pi)\big)$
with limits equal to $u\circ\Psi$ and $v\circ\Psi$, respectively.
By Proposition~\ref{p:fast1}, for a.e. $r\in(\delta,\rho)$ we have that 
$u_h\circ\Psi^r,v_h\circ\Psi^r,u\circ\Psi^r,v\circ\Psi^r\in W^{1,1}\big((-2\pi,2\pi)\big)$
and both $u_h\circ\Psi^r$ and $v_h\circ\Psi^r$ converge fast in $W^{1,1}\big((-2\pi,2\pi)\big)$
to $u\circ\Psi^r$ and $v\circ\Psi^r$, respectively. 

Fix such a good radius $r$. Then $u\circ\Psi^r$, $u_h\circ\Psi^r$ are absolutely
continuous up to a modification on a null set. Using the one-dimensional Sobolev
embedding and
passing to absolutely continuous representatives, we obtain a uniform convergence $u_h\circ\Psi^r\to u\circ\Psi^r$.
Joining with the $L^1$-convergence $(v_h\circ\Psi^r)'\to (v\circ\Psi^r)'$ we
conclude that
\[
\aligned
\int_{\der B(x,r)} u_h\ dv_h&=\int_{-\pi}^\pi (u_h\circ\Psi^r)(t) (v_h\circ\Psi^r)'(t)\,dt\to 
\int_{-\pi}^\pi (u\circ\Psi^r)(t) (v\circ\Psi^r)'(t)\,dt
\\&=\int_{\der B(x,r)} u\ dv
\endaligned
\]
as required.
By the arbitrary choice of $\delta>0$, we have proved that \eqref{formconv} holds for a.e. $r\in(0,\rho)$. 
\end{proof}
\begin{Lmm}\label{l_stokes}
Let  $v\in W^{1,1}(B(x,\rho))$. For almost every $r\in(0,\rho)$,
the oriented integral $ \displaystyle \intl_{\der B(x,r)}dv $ is well defined and equal to zero.
\end{Lmm}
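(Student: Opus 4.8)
The plan is to reduce the statement to the one-dimensional fact that the total variation of the increment of an absolutely continuous function around a closed loop is zero, using the slicing machinery from Sections~\ref{s:sl} and the polar-coordinate change of variables already employed in Lemma~\ref{lmm_passage2}. Concretely, I would first pick a fast convergent sequence $\{v_h\}\subset C^\infty(\overline{B(x,\rho)})$ with $v_h\to v$ in $W^{1,1}(B(x,\rho))$; such a sequence exists by the remark preceding Proposition~\ref{p:der}. For smooth $v_h$ the oriented integral $\int_{\der B(x,r)}dv_h$ is literally $\int_{-\pi}^{\pi}(v_h\circ\psi)'(t)\,dt=(v_h\circ\psi)(\pi)-(v_h\circ\psi)(-\pi)=0$, since $\psi(\pi)=\psi(-\pi)$. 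So the claim holds trivially along the approximating sequence.

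Next I would pass to the limit. Applying Lemma~\ref{lmm_passage2} with the constant choice $u\equiv u_h\equiv 1$ (which certainly lies in $W^{1,1}(B(x,\rho))$ and converges fast to itself), we obtain that for almost every $r\in(0,\rho)$ the restriction of $v$ to $\der B(x,r)$ belongs to $W^{1,1}(\der B(x,r))$ and
\[
\intl_{\der B(x,r)}dv_h=\intl_{\der B(x,r)}1\,dv_h\to\intl_{\der B(x,r)}1\,dv=\intl_{\der B(x,r)}dv.
\]
Since the left-hand side is identically zero for every $h$, the limit is zero, which is exactly the assertion of the lemma. In other words, the whole statement is essentially a corollary of Lemma~\ref{lmm_passage2} applied to the trivial pair $u=1$.

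The only point that needs a little care is the legitimacy of using the constant function $1$ as both $u$ and $u_h$ in Lemma~\ref{lmm_passage2}: one should check that the lemma's proof does not secretly require $u$ to be nonconstant or to have nontrivial gradient, but inspecting it one sees it only uses $u_h\circ\Psi^r\to u\circ\Psi^r$ uniformly, which is automatic for constants, and that the restriction $1|_{\der B(x,r)}$ is indeed in $W^{1,1}(\der B(x,r))$, which is obvious. Alternatively, if one prefers to avoid invoking Lemma~\ref{lmm_passage2} as a black box, I would instead run the same polar-coordinate slicing directly: write $v\circ\Psi\in W^{1,1}((\delta,\rho)\times(-2\pi,2\pi))$ for each $\delta>0$, apply Proposition~\ref{p:fast1} to get $v\circ\Psi^r\in W^{1,1}((-2\pi,2\pi))$ for a.e.\ $r$, pass to the absolutely continuous representative, and compute $\int_{-\pi}^{\pi}(v\circ\Psi^r)'(t)\,dt=(v\circ\Psi^r)(\pi)-(v\circ\Psi^r)(-\pi)$; the periodicity of $\Psi^r$ forces this difference to vanish once one knows $v\circ\Psi^r$ agrees at $t=\pi$ and $t=-\pi$, which follows from the absolute continuity on the slightly larger interval $(-2\pi,2\pi)$ together with $\Psi(r,\pi)=\Psi(r,-\pi)$. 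I expect the main (very modest) obstacle to be precisely this bookkeeping of representatives at the endpoints: one must make sure the absolutely continuous representative of $v\circ\Psi^r$ is chosen consistently on an interval strictly larger than $(-\pi,\pi)$ so that the two endpoint values genuinely coincide rather than merely being equal almost everywhere; using the interval $(-2\pi,2\pi)$ as in Lemma~\ref{lmm_passage2} handles this cleanly. Letting $\delta\to 0_+$ then gives the conclusion for a.e.\ $r\in(0,\rho)$.
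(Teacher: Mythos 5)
Your proposal is correct, and your primary route is genuinely different from the paper's. The paper proves the lemma directly: it slices in polar coordinates, invokes Proposition~\ref{p:der} to get $v\circ\Psi^r\in W^{1,1}(-2\pi,2\pi)$ for a.e.\ $r$, and then observes that the absolutely continuous representative of $v\circ\Psi^r$ is $2\pi$-periodic, so $\int_{-\pi}^{\pi}(v\circ\Psi^r)'\,dt=\bar v\circ\Psi^r(\pi)-\bar v\circ\Psi^r(-\pi)=0$; working on the doubled interval $(-2\pi,2\pi)$ is exactly what makes the endpoint bookkeeping you worried about harmless. You instead approximate $v$ by a fast convergent sequence of smooth $v_h$ (standard density, even though the remark in Section~\ref{s:sl} is stated for intervals, it holds for balls just as well), note that $\int_{\partial B(x,r)}dv_h=0$ trivially by the fundamental theorem of calculus and $\psi(\pi)=\psi(-\pi)$, and pass to the limit via Lemma~\ref{lmm_passage2} applied with $u\equiv u_h\equiv 1$ --- a legitimate choice, since constants lie in $W^{1,1}(B(x,\rho))$ and converge fast to themselves, and the lemma simultaneously supplies the a.e.\ well-definedness of the restriction of $v$ to the circle. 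Your route buys a proof that never touches the choice of representative of $v$ on the circle, at the cost of invoking the heavier (but already proven) convergence lemma and a density argument; the paper's route is shorter and self-contained, resting only on Proposition~\ref{p:der} plus periodicity. Your sketched alternative at the end is essentially the paper's argument verbatim, so both of your options are sound.
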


\begin{proof} Again, we use the polar coordinates as in the preceding proof.
By Proposition \ref{p:der}, for a.e.\ $r\in (0,\rho)$, the section
$v\circ\Psi^r$ belongs to $W^{1,1}(-2\pi,2\pi)$. If $\bar v\circ\Psi^r$ is the absolutely continuous representative of $v\circ\Psi^r$, we have
$$
\int_{\der B(x,r)}dv=\int_{-\pi}^{\pi}(v\circ\Psi^r)'(t)\,dt
=\bar v\circ\Psi^r(\pi)-\bar v\circ\Psi^r(-\pi)=0,
$$
as $\bar v\circ\Psi^r$ is obviously $2\pi$-periodic.
\end{proof}

%
%
%
%
%%%%%%%%%%%%%%%%%%%%%%%%%%%%%%%%%%%%%%%%%%%%%%%%%%%%%%%%%%%%%%%%%%%%%%%%%%%%%%%%%%%%%%%%%%%%%%%%%%%%%%%%%%%%%%%%%%%%%%
%
%
%
%
\section{An exterior differentiation by blow up}
%
%
%
%
%%%%%%%%%%%%%%%%%%%%%%%%%%%%%%%%%%%%%%%%%%%%%%%%%%%%%%%%%%%%%%%%%%%%%%%%%%%%%%%%%%%%%%%%%%%%%%%%%%%%%%%%%%%%%%%%%%%%%%%
%
%
%
%
%

Throughout this section, we fix an open set $\Omega\subset\R^2$, a mapping 
$f\in W_{\loc}^{1,1}(\Omega,\R^{2n+1})$ and a point $z\in\Omega$ that is a Lebesgue point of both 
$f$ and $Df$. Recall that $z$ is a Lebesgue point for a measurable function $u$ if 
$$
\lim_{r\to 0_+}r^{-n}\int_{B(z,r)}|u(y)-u(z)|\,dy=0
$$
and that almost every point is a Lebesgue point of $u$ if $u\in L^1_{loc}(\Omega)$.
As already pointed out in the introduction, $\H^n$ is identified with $\R^{2n+1}$ 
equipped with the vector fields of \eqref{Heisnvf}. We fix $\rho>0$ such that $\overline{B(z,\rho)}\subset\Omega$. 
Finally, the open unit ball in $\R^2$ centered at the origin will be denoted by $\B$.
\begin{Def}\label{rescaled}\rm
Let $0<r<\rho$ and define the {\em rescaled function} $f_{z,r}:\B\to\R^{2n+1}$ as 
$$
f_{z,r}(y):=\frac{f(z+ry)-f(z)}{r}\;.
$$
\end{Def}
Obviously, $f_{z,r}\in W^{1,1}(\B, \R^{2n+1})$ is well defined whenever $0<r\leq \rho$.
We use the assumption that $z$ is a Lebesgue point of both $f$ and $Df$ to conclude that
\begin{equation}\label{blowup}
\lim_{r\to 0_+}\int_{\B}|f_{z,r}(y)-Df(z)\cdot y|\,dy=0,
\end{equation}
cf.\ e.g.\ \cite[Theorem 3.4.2]{Zie}.
The next lemma provides us with important information on the rescaled function $f_{z,\rho}$.
\begin{Lmm}\label{lmm_exactness}
If \eqref{eq_horizn} holds almost everywhere, then there exists $w\in W^{1,1}(\B)$ such that 
$$
dw(y)=\sum_{j=1}^n 
f^j_{z,\rho}(y)df^{j+n}_{z,\rho}(y)-f^{j+n}_{z,\rho}(y)\,
df^j_{z,\rho}(y)\quad\mbox{for a.e.}\quad y\in\B\,.
$$
\end{Lmm}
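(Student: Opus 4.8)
The plan is to produce the primitive $w$ explicitly by pulling back the assumed horizontality relation \eqref{eq_horizn} under the affine rescaling $y\mapsto z+\rho y$. First I would observe that if $g\in W^{1,1}_{\loc}(\Omega)$ and we set $g_{z,\rho}(y)=\bigl(g(z+\rho y)-g(z)\bigr)/\rho$, then $g_{z,\rho}\in W^{1,1}(\B)$ and $dg_{z,\rho}(y)=(dg)(z+\rho y)$ for a.e.\ $y\in\B$; this is just the chain rule for Sobolev functions composed with the bi-Lipschitz change of variables $T(y)=z+\rho y$, which is standard. In particular each component $f^k_{z,\rho}$ is a well-defined $W^{1,1}(\B)$ function with $df^k_{z,\rho}(y)=(df^k)(z+\rho y)$.

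Next I would define the candidate primitive by the same recipe applied to the last component: set
$$
w(y):=f^{2n+1}_{z,\rho}(y)=\frac{f^{2n+1}(z+\rho y)-f^{2n+1}(z)}{\rho},
$$
so that $w\in W^{1,1}(\B)$ and $dw(y)=(df^{2n+1})(z+\rho y)$ for a.e.\ $y\in\B$. Now the hypothesis that \eqref{eq_horizn} holds a.e.\ in $\Omega$ means that
$$
(df^{2n+1})(x)=\sum_{j=1}^n\bigl(f^j(x)\,(df^{j+n})(x)-f^{j+n}(x)\,(df^j)(x)\bigr)
$$
for a.e.\ $x\in\Omega$; since $T$ maps null sets to null sets and conversely, this identity holds for a.e.\ $x=z+\rho y$ with $y\in\B$. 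Substituting $x=z+\rho y$ and rewriting each factor in terms of the rescaled functions — using $f^j(z+\rho y)=\rho f^j_{z,\rho}(y)+f^j(z)$ and $(df^j)(z+\rho y)=df^j_{z,\rho}(y)$ — the right-hand side becomes
$$
\sum_{j=1}^n\Bigl(\bigl(\rho f^j_{z,\rho}+f^j(z)\bigr)df^{j+n}_{z,\rho}-\bigl(\rho f^{j+n}_{z,\rho}+f^{j+n}(z)\bigr)df^j_{z,\rho}\Bigr).
$$
This differs from the form in the statement by a factor $\rho$ together with lower-order exact terms coming from the constants $f^j(z)$, namely $\sum_j\bigl(f^j(z)\,df^{j+n}_{z,\rho}-f^{j+n}(z)\,df^j_{z,\rho}\bigr)=d\bigl(\sum_j(f^j(z)f^{j+n}_{z,\rho}-f^{j+n}(z)f^j_{z,\rho})\bigr)$. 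Absorbing those into the primitive and dividing by $\rho$, one sees that
$$
\widetilde w(y):=\frac{1}{\rho}\Bigl(w(y)-\sum_{j=1}^n\bigl(f^j(z)f^{j+n}_{z,\rho}(y)-f^{j+n}(z)f^j_{z,\rho}(y)\bigr)\Bigr)\in W^{1,1}(\B)
$$
satisfies $d\widetilde w(y)=\sum_{j=1}^n\bigl(f^j_{z,\rho}(y)\,df^{j+n}_{z,\rho}(y)-f^{j+n}_{z,\rho}(y)\,df^j_{z,\rho}(y)\bigr)$ for a.e.\ $y\in\B$, which is the asserted conclusion (relabel $\widetilde w$ as $w$).

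The only genuine point requiring care — and hence the step I would single out as the main obstacle — is the justification of the product-rule manipulations at the level of $W^{1,1}$: the form $f^j_{z,\rho}\,df^{j+n}_{z,\rho}$ involves a product of two merely $W^{1,1}$ functions, so the identity $d(f^j_{z,\rho}f^{j+n}_{z,\rho})=f^j_{z,\rho}\,df^{j+n}_{z,\rho}+f^{j+n}_{z,\rho}\,df^j_{z,\rho}$ is not a priori meaningful. However, one never actually needs that product rule here: the identity to be verified is purely pointwise a.e.\ (it is obtained by substituting into the pointwise relation \eqref{eq_horizn}), and both sides are a.e.-defined measurable $1$-forms with $L^1$ coefficients, since $f_{z,\rho}\in W^{1,1}(\B)\hookrightarrow L^2(\B)$ in dimension two (or more simply, $f\in L^1_{\loc}$ suffices for the coefficients to be measurable and the rescaled constants $f(z)$ are finite at a Lebesgue point). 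So the argument is entirely elementary: define $w$ explicitly as above, apply the affine chain rule for Sobolev functions to identify $dw$, and match it termwise with the pointwise horizontality relation transported by $T$. I would present it in exactly that order.
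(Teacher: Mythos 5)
Your proof is correct and coincides with the paper's own argument: the paper likewise rewrites the pointwise relation \eqref{eq_horizn} along $y\mapsto z+\rho y$, adds and subtracts the constant terms $f^j(z)\nabla f^{j+n}(z+\rho y)$, and takes as primitive exactly your $\widetilde w=\tfrac1\rho\bigl(f^{2n+1}_{z,\rho}-\sum_j(f^j(z)f^{j+n}_{z,\rho}-f^{j+n}(z)f^j_{z,\rho})\bigr)$. Your closing remark that no product rule for $W^{1,1}$ functions is needed, since the identity is purely pointwise a.e., is also exactly the point implicit in the paper's computation.
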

\begin{proof} In view of \eqref{eq_horizn}, it follows that 
$$
\nabla f^{2n+1}_{z,\rho}(y)=\nabla f^{2n+1}(z+\rho y)=\sum_{j=1}^nf^j(z+\rho y)\nabla f^{j+n}(z+\rho y)-f^{j+n}(z+\rho y)\nabla f^{j}(z+\rho y)
$$
for a.e. $y\in\B$. We add and subtract all terms of the form $f^j(z)\nabla f^{j+n}(z+\rho y)$, getting
\begin{equation*}\begin{split}\nabla f^{2n+1}_{z,\rho}&(y)=\sum_{j=1}^nf^j(z+\rho y)\nabla f^{j+n}(z+\rho y)-f^{j+n}(z+\rho y)\nabla f^{j}(z+\rho y)\\
=&\sum_{j=1}^n\left(f^j(z+\rho y)-f^j(z)\right)\nabla f^{j+n}(z+\rho y)-\left(f^{j+n}(z+\rho y)-f^{j+n}(z)\right)\nabla f^{j}(z+\rho y)\\
+&\sum_{j=1}^n f^j(z)\nabla f^{j+n}(z+\rho y)-f^{j+n}(z)\nabla f^{j}(z+\rho y)\;.\end{split}\end{equation*}
Dividing by $\rho$, we can rewrite the previous equation as follows
$$\frac{1}{\rho}\left\{\nabla f^{2n+1}_{z,\rho}(y)-\sum_{j=1}^n\left(f^j(z)\nabla f^{j+n}(z+\rho y)-f^{j+n}(z)\nabla f^{j}(z+\rho y)\right)\right\}$$
$$=\sum_{j=1}^n f^j_{z,\rho}(y)\nabla f^{j+n}(z+\rho y)-f^{j+n}_{z,\rho}(y)\nabla f^{j}(z+\rho y)\;.$$
Since $\nabla f(z+\rho y)=\nabla f_{z,\rho}(y)$, 
this immediately leads to the conclusion.
\end{proof}

\medskip
Next, we show that, under sufficient integrability conditions, it is possible to take somehow 
the differential of both sides of (\ref{eq_horizn}), achieving the following theorem.
\begin{Lmm}\label{prp_2form2}
If \eqref{eq_horizn} holds almost everywhere, then we have 
\[\displaystyle \sum_{j=1}^ndf^j(z)\wedge df^{j+n}(z)=0\;.\]
\end{Lmm}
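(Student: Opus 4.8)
The plan is to combine Lemma~\ref{lmm_exactness} with the blow-up convergence \eqref{blowup} and the circle lemmas of Section~3. Since $z$ is a Lebesgue point of both $f$ and $Df$, the rescaled functions $f_{z,r}$ converge in $L^1(\B,\R^{2n+1})$ to the linear map $u(y)=Df(z)\cdot y$ as $r\to 0_+$, and by standard properties of rescaling the gradients $\nabla f_{z,r}(y)=\nabla f(z+ry)$ also converge in $L^1(\B)$ to the constant matrix $Df(z)=\nabla u$; hence $f_{z,r}\to u$ in $W^{1,1}(\B,\R^{2n+1})$. I would first pass to a sequence $\rho_k\to 0_+$ along which the convergence is fast in the sense of Section~2, which is possible since any $L^1$-convergent (equivalently, $W^{1,1}$-convergent) sequence has a fast-convergent subsequence; this is exactly the hypothesis needed to invoke Lemma~\ref{lmm_passage2}.

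Next I would extract the scalar identity. For each fixed pair of indices $j$, apply Lemma~\ref{lmm_passage2} with $(u_h,v_h)=(f^j_{z,\rho_k},f^{j+n}_{z,\rho_k})$ and with the roles swapped, so that for a.e.\ $r\in(0,1)$ the restrictions to $\partial B(0,r)$ lie in $W^{1,1}(\partial B(0,r))$ and
\[
\sum_{j=1}^n\intl_{\partial B(0,r)}\Big(f^j_{z,\rho_k}\,df^{j+n}_{z,\rho_k}-f^{j+n}_{z,\rho_k}\,df^j_{z,\rho_k}\Big)
\;\longrightarrow\;
\sum_{j=1}^n\intl_{\partial B(0,r)}\big(u^j\,du^{j+n}-u^{j+n}\,du^j\big)
\]
as $k\to\infty$. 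On the left, Lemma~\ref{lmm_exactness} gives $w_{\rho_k}\in W^{1,1}(\B)$ whose differential equals the integrand a.e.\ on $\B$; by Proposition~\ref{p:der} applied in polar coordinates, for a.e.\ $r$ the restriction of the one-form to $\partial B(0,r)$ is a.e.\ equal to $d(w_{\rho_k}\circ\Psi^r)$, so by Lemma~\ref{l_stokes} the oriented integral over $\partial B(0,r)$ vanishes for every $k$. Hence the limit is $0$, i.e.\ $\sum_{j=1}^n\intl_{\partial B(0,r)}(u^j\,du^{j+n}-u^{j+n}\,du^j)=0$ for a.e.\ $r\in(0,1)$.

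It remains to turn this into $\sum_{j=1}^n df^j(z)\wedge df^{j+n}(z)=0$. Here $u^j(y)=a_j\cdot y$ and $u^{j+n}(y)=b_j\cdot y$ with $a_j=df^j(z)$, $b_j=df^{j+n}(z)\in(\R^2)^*$ (viewed as gradients), so $u$ is smooth and the one-form $\omega:=\sum_j(u^j\,du^{j+n}-u^{j+n}\,du^j)$ has constant exterior derivative $d\omega=2\sum_j du^j\wedge du^{j+n}=2\big(\sum_j df^j(z)\wedge df^{j+n}(z)\big)=:2c\,dy_1\wedge dy_2$ for a scalar $c$. Applying the classical Stokes theorem on the disk $B(0,r)$ gives $\intl_{\partial B(0,r)}\omega=\int_{B(0,r)}d\omega=2c\,\pi r^2$, and since the left side is $0$ for a.e.\ $r$ we conclude $c=0$, which is precisely $\sum_{j=1}^n df^j(z)\wedge df^{j+n}(z)=0$.

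The main obstacle is bookkeeping rather than a deep idea: one must be careful that the "for a.e.\ $r$" sets produced by Lemma~\ref{lmm_passage2}, by Proposition~\ref{p:der} (to make the restricted form weakly exact and apply Lemma~\ref{l_stokes}), and by the classical Stokes step are simultaneously good for a set of $r$ of positive measure, and that one may interchange the finite sum over $j$ with the limit in $k$. Both are routine: a finite intersection of full-measure sets is full, and the sum over $j$ is finite. Passing to the fast-convergent subsequence at the outset is the one point where the Section~2 machinery is genuinely used, and it is harmless since the conclusion \eqref{df_j} does not depend on the subsequence.
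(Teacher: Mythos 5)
Your proof is correct and follows essentially the same route as the paper: blow-up at a Lebesgue point, passage to a fast-convergent sequence of rescalings, Lemma~\ref{lmm_exactness} plus the circle lemmas (Lemma~\ref{lmm_passage2}, Lemma~\ref{l_stokes}) to kill the oriented integrals, and the classical Stokes theorem for the linear limit. The only quibble is that the good radii coming from the weak-exactness/Lemma~\ref{l_stokes} step depend on $k$, so one needs a countable (not merely finite) intersection of full-measure sets, which is of course still harmless.
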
 
\begin{proof}
We choose $\rho_h\searrow0$ such that $\rho_1<\rho$ and set
$u_h=f_{z,\rho_h}$.
By Lemma~\ref{lmm_exactness}, 
there exists $w_h\in W^{1,1}(\B)$ such that for $\cL^2$-almost every $y\in\B$ we have
$$
dw_h(y)=\sum_{j=1}^n u_h^j(y)\,du_h^{j+n}(y)-u_h^{j+n}(y)\,du_h^j(y)\;.
$$
Furthermore, since $z$ is a Lebesgue point of both $f$ and $Df$, it follows that
\begin{equation}\label{SobolevConv}
u_h\to u\;\text{ in }\; W^{1,1}(\B),\quad \text{where}\quad
u(y)=\nabla f(z)\cdot y,\quad y\in\B.
\end{equation}
We may assume that the sequence $\rho_h$ is defined in such a way that the
convergence in \eqref{SobolevConv} is fast.
\relax Lemma~\ref{lmm_passage2} implies that for almost every $r\in(0,1)$ the integral
$$
\int_{\partial B(0,r)}\Bigl(\sum_{j=1}^n u_h^j\,du_h^{j+n}-u_h^{j+n}\,du_h^{j}\Bigr)
$$
is well defined and equal to $\int_{\partial B(0,r)}dw_h$. 
Thus, in view of Lemma~\ref{l_stokes} we have 
$$
\int_{\partial B(0,r)} \Bigl(\sum_{j=1}^n u_h^j\,du_h^{j+n}-u_h^{j+n}\,du_h^{j}\Bigr)
=\int_{\partial B(0,r)}dw_h=0
$$
for all $h$ and almost every $r\in (0,1)$.
Taking into account both \eqref{SobolevConv} and Lemma~\ref{lmm_passage2}, 
for almost every $r\in (0,1)$ we have
$$
0=\int_{\partial B(0,r)} \Bigl(\sum_{j=1}^n u_h^j\,du_h^{j+n}-u_h^{j+n}\,du_h^{j}\Bigr)\to\int_{\partial B(0,r)}
\Bigl(\sum_{j=1}^n u^j\,du^{j+n}-u^{j+n}\,du^{j}\Bigr).
$$
It is enough to pick one such a radius, so that by Stokes theorem, we obtain
\begin{equation}\label{ujm=2}
\int_{ B(0,r)} \sum_{j=1}^n du^j\wedge du^{j+n}=0.
\end{equation}
The equation \eqref{ujm=2} yields
$$
\cL^2(B(0,r))\,\sum_{j=1}^n\det\bigl(\nabla f^j(z),\nabla f^{j+n}(z)\bigr)=0\,.
$$
Thus, we have 
$\sum_{j=1}^n\det\bigl(\nabla f^j(z),\nabla f^{j+n}(z)\bigr)=0$
which gives our claim.
\end{proof}

%%%%%%%%%%%%%%%%%%%%%%%%%%%%%%%%%%%%%%%%%%%%%%%%%%%%%%%%%%%%%%%%%%%%%%%%%%%%%%%%%%%%%%%%%%%%%%%%%%%%%%%%%%%%%%%%%%%%%%%
%
%
%
%
\section{The $m$-dimensional case}
%
%
%
%
%%%%%%%%%%%%%%%%%%%%%%%%%%%%%%%%%%%%%%%%%%%%%%%%%%%%%%%%%%%%%%%%%%%%%%%%%%%%%%%%%%%%%%%%%%%%%%%%%%%%%%%%%%%%%%%%%%%%%%%

In this section we treat the general case $m\ge2$.

\begin{Teo}\label{prp_2form} Let $\Omega\subset\R^m$ be open, let $m\geq2$ and set and $f\in W_{\loc}^{1,1}(\Omega,\R^{2n+1})$.
If \eqref{eq_horizn} holds almost everywhere, then almost everywhere we have 
\begin{equation}\label{final}
 \sum_{j=1}^ndf^j\wedge df^{j+n}=0\,.
\end{equation}
\end{Teo}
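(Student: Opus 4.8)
The plan is to reduce the $m$-dimensional statement to the already established two-dimensional case of Lemma~\ref{prp_2form2} by a slicing argument, exactly the strategy announced in the introduction. Fix a point $z\in\Omega$ which is simultaneously a Lebesgue point of $f$ and of $Df$; almost every point of $\Omega$ has this property. I want to show that the $2$-form $\omega(z):=\sum_{j=1}^n df^j(z)\wedge df^{j+n}(z)$ vanishes, and it suffices to test it against every pair of coordinate directions $\e_k,\e_l$ with $k<l$, i.e.\ to show that the restriction of $f$ to the two-dimensional plane spanned by $\e_k$ and $\e_l$ through $z$ has vanishing mixed $2$-form at $z$.

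First I would localize: choose an $m$-dimensional open interval $Q$ with $z\in Q$ and $\overline Q\subset\Omega$, and take a sequence of smooth maps converging fast to $f$ in $W^{1,1}(Q,\R^{2n+1})$, so that Propositions~\ref{p:fast}, \ref{p:der} and \ref{p:fast1} apply with $\Gamma=\{k,l\}$. For almost every $\hat z\in \hat Q_\Gamma$ the section $y\mapsto f(\hat z+y)$, $y\in Q_\Gamma$, lies in $W^{1,1}(Q_\Gamma,\R^{2n+1})$, its partial derivatives in the directions $\e_k,\e_l$ are the corresponding sections of $\der_{x_k}f,\der_{x_l}f$ by \eqref{pders}, and — crucially — the horizontality equation \eqref{eq_horizn}, being a pointwise identity between $\nabla f$-components holding $\cL^m$-a.e., restricts by Fubini to an identity holding $\cL^2$-a.e.\ on $Q_\Gamma$ for a.e.\ such slice; here one only needs the $k$- and $l$-components of the gradients, which are exactly the gradient components of the $2$-dimensional section. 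Hence Lemma~\ref{prp_2form2} applies to each good slice and yields
\[
\sum_{j=1}^n \Bigl(\der_{x_k}f^j\,\der_{x_l}f^{j+n}-\der_{x_l}f^j\,\der_{x_k}f^{j+n}\Bigr)=0
\quad\text{a.e.\ on }Q_\Gamma,
\]
for almost every slice. Integrating over the transversal parameter $\hat z\in\hat Q_\Gamma$ and using Fubini again, this coefficient of $\omega$ in the $(\e_k,\e_l)$-slot vanishes $\cL^m$-a.e.\ on $Q$. Running over the finitely many index pairs $\{k,l\}$ and covering $\Omega$ by countably many such intervals $Q$ gives \eqref{final} a.e.\ on $\Omega$.

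The main obstacle is the compatibility of the a.e.\ assumptions under slicing: one must be sure that the \emph{same} full-measure slice simultaneously (i) makes the section Sobolev with the commutation \eqref{pders} for the relevant derivatives, (ii) has the pointwise horizontality equation \eqref{eq_horizn} valid $\cL^2$-a.e.\ on it, and (iii) has the image-point $z$ lying on it in a way that is still meaningful — though in fact no such pointwise constraint at $z$ is needed, since we only integrate the vanishing of the coefficient over all slices and conclude $\cL^m$-a.e.\ in $Q$. Item (ii) is the delicate point: \eqref{eq_horizn} is an equation between components of $\nabla f$, and one must note that the $\e_k,\e_l$-components of $\nabla f^i$ coincide (a.e., on a.e.\ slice) with the gradient of the $2$-dimensional section $f^i$, so that Lemma~\ref{prp_2form2} is genuinely applicable to the section. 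Once this bookkeeping is arranged via Proposition~\ref{p:der} and Fubini, the rest is routine.
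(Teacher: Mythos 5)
Your proposal is correct and follows essentially the same route as the paper: restrict to a cube, slice along the plane spanned by $\e_k,\e_l$ via Proposition~\ref{p:der} so that \eqref{eq_horizn} passes to a.e.\ two-dimensional section, apply Lemma~\ref{prp_2form2} on each good slice, and recombine by Fubini over all index pairs. The opening fixation of a Lebesgue point $z$ is superfluous (as you yourself note), but this does not affect the argument.
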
 
\begin{proof} It is enough to verify \eqref{final} on an arbitrary
$m$-dimensional open cube $Q\subset\subset\Omega$.
Fix $1\leq k<l\leq m$. 
We set $\Gamma=\{k,l\}$ and use the notation of Section \ref{s:sl}, 
with the exception that now we use the \textit{subscript} $z$ to denote the section
$$
f_z(y)=f(z+y),\qquad y\in Q_{\Gamma}.
$$
By Proposition~\ref{p:der}, for a.e.\ $z\in \hat Q_{\Gamma}$ we have
that $f_z\in W^{1,1}(Q_{\Gamma})$ and
\begin{equation}\label{pders2}
\frac{\partial f_z}{\partial x_k}=\Bigl(\frac{\partial f}{\partial x_k}\Bigr)_z\;,
\qquad
\frac{\partial f_z}{\partial x_l}=\Bigl(\frac{\partial f}{\partial x_l}\Bigr)_z
\quad\text{a.e. in }Q_{\Gamma}.
\end{equation}
In particular, we have
$$
df_z^{2n+1}=\sum_{j=1}^n \big(f_z^jdf_z^{j+n}-f_z^{j+n}df_z^j\big)
\quad\text{a.e. in }Q_{\Gamma}.
$$
Then use Lemma \ref{prp_2form2} on $Q_{\Gamma}$ to infer that
$$
\sum_{j=1}^ndf_z^j\wedge df_z^{j+n}=0 \quad\text{a.e. in }Q_{\Gamma}.
$$
Using Fubini's theorem and \eqref{pders2} we obtain that 
$$
\sum_{j=1}^n\det
\begin{pmatrix}
\frac{\partial f^j}{\partial x_k}, &\frac{\partial f^{j}}{\partial x_l}\\
\frac{\partial f^{j+n}}{\partial x_k}, &\frac{\partial f^{j+n}}{\partial x_l}
\end{pmatrix}
=0
 \quad\text{a.e. in }Q.
$$
By the arbitrary choice of $k$ and $l$, the equality \eqref{final} holds a.e.\ in $Q$.
\end{proof}

%%%%%%%%%%%%%%%%%%%%%%%%%%%%%%%%%%%%%%%%%%%%%%%%%%%%%%%%%%%%%%%%%%%%%%%%%%%%%%%%%%%%%%%%%%%%%%%%%%%%%%%%%%%%%%%%%%%%%%%
%
%
%
%

%%%%%%%%%%%%%%%%%%%%%%%%%%%%%%%%%%%%%%%%%%%%%%%%%%%%%%%%%%%%%%%%%%%%%%%%%%%%%%%%%%%%%%%%%%%%%%%%%%%%%%%%%%%%%%%%%%%%%%%
%
%
%
%
\section{Non-horizontality of higher dimensional Sobolev sets}
%
%
%
%
%%%%%%%%%%%%%%%%%%%%%%%%%%%%%%%%%%%%%%%%%%%%%%%%%%%%%%%%%%%%%%%%%%%%%%%%%%%%%%%%%%%%%%%%%%%%%%%%%%%%%%%%%%%%%%%%%%%%%%%

In this section, the positive integers $m$ and $n$ will be assumed to satisfy the condition $n+1\leq m\leq 2n$.

\begin{Lmm}\label{lmm_lowrank} 
Let $\u_1,\dots,\u_{2n}\in\R^m$.
Assume that 
$$
\sum_{j=1}^n \u_j\wedge \u_{j+n}=0.
$$
Then the matrix $B$ with rows $\u_1,\dots,\u_{2n}$ has rank at most $n$.
\end{Lmm}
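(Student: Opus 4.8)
The plan is to restate the vanishing hypothesis as the isotropy of a suitable subspace with respect to the standard symplectic form on $\R^{2n}$, and then to invoke the elementary dimension bound for isotropic subspaces. Set $r=\rk B$, so that $r=\dim W$ where $W=\mathrm{span}\{\u_1,\dots,\u_{2n}\}\subseteq\R^m$; the goal is to show $r\le n$.

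First I would view the $2$-vector $\omega:=\sum_{j=1}^n\u_j\wedge\u_{j+n}\in\Lambda^2\R^m$ as an alternating bilinear form on the dual space $(\R^m)^*$, so that
\[
\omega(\xi,\eta)=\sum_{j=1}^n\big(\lan\xi,\u_j\ran\,\lan\eta,\u_{j+n}\ran-\lan\xi,\u_{j+n}\ran\,\lan\eta,\u_j\ran\big),\qquad \xi,\eta\in(\R^m)^*.
\]
The hypothesis $\omega=0$ means exactly that this expression vanishes for every pair $(\xi,\eta)$, since a $2$-vector is zero precisely when the associated alternating form on the dual space is zero. Next I would introduce the linear map $\Phi\colon(\R^m)^*\to\R^{2n}$ given by $\Phi(\xi)=(\lan\xi,\u_1\ran,\dots,\lan\xi,\u_{2n}\ran)$; upon identifying $(\R^m)^*$ with column vectors this is $\xi\mapsto B\xi$. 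Its kernel is the annihilator of $W$, hence the image $V:=\Phi\big((\R^m)^*\big)\subseteq\R^{2n}$ has dimension $\dim V=m-\dim\ker\Phi=\dim W=r$.

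Now let $\sigma$ denote the standard symplectic form $\sigma(v,w)=\sum_{j=1}^n(v_jw_{j+n}-v_{j+n}w_j)$ on $\R^{2n}$. The displayed identity above is exactly $\sigma\big(\Phi(\xi),\Phi(\eta)\big)=\omega(\xi,\eta)$, so $\omega=0$ is equivalent to $\sigma$ vanishing on $V\times V$, i.e.\ to $V$ being an \emph{isotropic} subspace of $(\R^{2n},\sigma)$. To conclude, I would use the classical fact that an isotropic subspace $V$ of a $2n$-dimensional symplectic space satisfies $\dim V\le n$: indeed $V$ is contained in its $\sigma$-orthogonal complement $V^{\perp}$, and nondegeneracy of $\sigma$ gives $\dim V+\dim V^{\perp}=2n$, whence $2\dim V\le 2n$. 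Combined with $\dim V=r=\rk B$, this yields $\rk B\le n$.

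I do not expect a genuine obstacle here: the only points requiring a line of care are the identification of $\Lambda^2\R^m$ with alternating forms on $(\R^m)^*$ and the rank computation $\dim V=\dim W$. If one prefers to avoid symplectic terminology, the same argument can be phrased directly: assuming $r\ge n+1$, pick $\xi^{(0)},\dots,\xi^{(n)}$ whose $\Phi$-images span an $(n{+}1)$-dimensional subspace of $\R^{2n}$ and expand the relations $0=\sigma\big(\Phi(\xi^{(a)}),\Phi(\xi^{(b)})\big)$ to reach a contradiction with the nondegeneracy of $\sigma$; but the isotropy formulation keeps the bookkeeping minimal.
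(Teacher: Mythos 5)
Your proof is correct and is essentially the paper's own argument expressed in symplectic language: the paper's computation $\lan B\w,\,JB\ve\ran=0$ is exactly your statement that the image of $B$ is isotropic for the standard symplectic form on $\R^{2n}$, and its concluding dimension count (the images of $B$ and $JB$ are orthogonal subspaces of $\R^{2n}$ of equal dimension) is the same bound you obtain from $V\subseteq V^{\perp}$ and nondegeneracy. No gaps; the only difference is that the paper verifies the isotropy by an explicit coordinate computation rather than via the identification of $2$-vectors with alternating forms on the dual.
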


\begin{proof}
%Let $P$ be the matrix with
%rows $\u_1,\dots,\u_n$ and $Q$ be the matrix with
%rows $\u_{n+1},\dots,\u_{2n}$, so that $B=\binom{P}{Q}$. 
We denote the inner product in $\R^{2n}$ by $\langle\cdot,\cdot\rangle$. Further,
$(\e_1,\dots,\e_{2n})$ is the canonical basis of 
$\R^{2n}$ and $I_n$ is the $n\times n$ identity matrix.
We consider the $2n\times 2n$ matrix
$$
J=\begin{pmatrix}0&-I_n\\I_n&0\end{pmatrix}.
$$
Choose $\ve=(v_1,\dots,v_m)$, 
$\w=(w_1,\dots,w_m)\in\R^m$. We have
$$
B\w=\sum_{i=1}^{n}\sum_{k=1}^m
(b_i^{k}w_{k}\e_i+b_{i+n}^{k}w_{k}\e_{i+n})\ ,\qquad
JB\ve=\sum_{j=1}^{n}\sum_{l=1}^m
(b_{j}^{l}v_{l}\e_{j+n}-b_{j+n}^{l}v_{l}\e_j)
$$
and this implies that
$$
\langle B\w,\;JB\ve\rangle
=\sum_{k,l=1}^m
\sum_{i,j=1}^n
\langle b_i^{k}w_{k}\e_i+b_{i+n}^{k}w_{k}\e_{i+n},\;
b_{j}^{l}v_{l}\e_{j+n}-b_{j+n}^{l}v_{l}\e_j\rangle.
$$
The summands are nonzero only for $i=j$, in which case
$$
\langle b_i^{k}w_{k}\e_i+b_{i+n}^{k}w_{k}\e_{i+n},\;
b_{i}^{l}v_{l}\e_{i+n}-b_{i+n}^{l}v_{l}\e_i\rangle=
w_{k}v_{l}\det
\begin{pmatrix}b_i^{l},&b_i^{k}\\
b_{i+n}^{l},&b_{i+n}^{k}
\end{pmatrix}\;,
$$
so that
$$
\langle B\w,\;JB\ve\rangle
=\sum_{k,l=1}^mw_{k}v_{l}\sum_{i=1}^n
\det
\begin{pmatrix}b_i^{l},&b_i^{k}\\
b_{i+n}^{l},&b_{i+n}^{k}\end{pmatrix}
=\sum_{k,l=1}^mw_{k}v_{l}
\Bigl(\sum_{i=1}^n\u_i\wedge \u_{i+n}\Bigr)_{l,k}
=0.
$$
Then the images of $B$ and of $JB$ are orthogonal subspaces of $\R^{2n}$, having the same dimension,
hence the rank of $B$ cannot be greater than $n$.
\end{proof}
\begin{Teo}\label{teo_lowrank}  Let $\Omega\subseteq\R^m$ be an open set and consider 
$f\in W^{1,1}_{\loc}(\Omega,\R^{2n+1})$ which almost everywhere satisfies (\ref{eq_horizn}). 
It follows that the Jacobian matrix of $f$ almost everywhere has rank at most $n$.
\end{Teo}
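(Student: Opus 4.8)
The plan is to reduce the statement to the purely linear-algebraic fact already isolated in Lemma~\ref{lmm_lowrank}. First I would invoke Theorem~\ref{prp_2form}: since $f\in W^{1,1}_{\loc}(\Omega,\R^{2n+1})$ satisfies \eqref{eq_horizn} almost everywhere, we obtain
\begin{equation*}
\sum_{j=1}^n df^j\wedge df^{j+n}=0\qquad\text{a.e.\ in }\Omega.
\end{equation*}
Next I would fix a point $z$ in the full-measure set where this identity holds and where, in addition, $Df(z)$ exists in the usual sense (i.e.\ $z$ is an approximate-differentiability point of $f$, which is automatic for $W^{1,1}_{\loc}$ a.e.). At such a $z$, each $df^i(z)$ is the linear functional $y\mapsto \nabla f^i(z)\cdot y$, so $df^i(z)$ corresponds to the row vector $\nabla f^i(z)\in\R^m$; the vanishing of the $2$-form above is exactly the statement that these row vectors $\u_j:=\nabla f^j(z)$, $\u_{j+n}:=\nabla f^{j+n}(z)$, $j=1,\dots,n$, satisfy $\sum_{j=1}^n \u_j\wedge\u_{j+n}=0$ in $\Lambda^2(\R^m)^*$.

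Now I would apply Lemma~\ref{lmm_lowrank} to the $2n$ vectors $\u_1,\dots,\u_{2n}\in\R^m$: the matrix $B$ with these rows has rank at most $n$. The only remaining point is to connect $B$ with the Jacobian matrix $Df(z)$, which is the $(2n+1)\times m$ matrix whose rows are $\nabla f^1(z),\dots,\nabla f^{2n}(z),\nabla f^{2n+1}(z)$. The first $2n$ rows form exactly $B$, so $\operatorname{rk} B\le n$. For the last row, I would use \eqref{eq_horizn} once more \emph{at the same point} $z$: it gives
\begin{equation*}
\nabla f^{2n+1}(z)=\sum_{j=1}^n\bigl(f^j(z)\,\nabla f^{j+n}(z)-f^{j+n}(z)\,\nabla f^j(z)\bigr),
\end{equation*}
so $\nabla f^{2n+1}(z)$ is a linear combination of the rows of $B$ and therefore lies in the row space of $B$. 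Hence the row space of $Df(z)$ coincides with that of $B$, and $\operatorname{rk} Df(z)=\operatorname{rk} B\le n$. Since the set of admissible $z$ has full measure in $\Omega$, the Jacobian of $f$ has rank at most $n$ almost everywhere, which is the assertion.

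The argument is essentially a bookkeeping exercise once Theorem~\ref{prp_2form} and Lemma~\ref{lmm_lowrank} are in hand, so I do not anticipate a genuine obstacle. The one place to be slightly careful is the identification of the ``a.e.\ pointwise'' identity \eqref{eq_horizn} with an identity among honest gradient vectors: one must work on the intersection of the full-measure sets where \eqref{eq_horizn} holds, where $f$ is approximately differentiable with approximate gradient equal to the weak gradient, and where the $2$-form identity of Theorem~\ref{prp_2form} holds; this intersection still has full measure, and on it all the manipulations above are legitimate. I would also note explicitly that the conclusion is vacuous unless $n+1\le m$, which is precisely the range in which the low rank property is nontrivial and in which it will be used to derive Theorem~\ref{H_dm}.
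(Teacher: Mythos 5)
Your proposal is correct and follows essentially the same route as the paper, whose proof consists precisely of combining Theorem~\ref{prp_2form} with Lemma~\ref{lmm_lowrank}. Your explicit observation that the last row $\nabla f^{2n+1}$ lies in the row space of the first $2n$ rows by \eqref{eq_horizn} is a detail the paper leaves implicit, and it is exactly the right way to pass from the rank bound on $B$ to the rank bound on the full Jacobian.
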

\begin{proof}
This is a consequence of Theorem~\ref{prp_2form} and Lemma \ref{lmm_lowrank}.
\end{proof}

\medskip

By Theorem~\ref{teo_lowrank}, the proof of Theorem~\ref{H_dm} follows essentially the same lines
of \cite{Mag17}. Next, for the sake of the reader, we adapted this proof to our setting.

\medskip 
\begin{proof}[Proof of Theorem~\ref{H_dm}]
By Theorem~\ref{teo_lowrank}, the equation (\ref{eq_horizn}) fails to hold for $f$ on a set $E\subset\Omega$ of positive $\cL^m$-measure. We can assume that $E$ is bounded, made by density points, that everywhere on $E$ the approximate differentiable of $f$ exists and equals its distributional differential and they have everywhere rank equal to $m$. 
Up to taking a smaller piece of $E$, we can also assume that $f$ is Lipschitz. 
Then we consider a Lipschitz extension of
$f\vert_E$ to all of $\R^m$ and apply Whitney extension theorem, hence finding a subset $E_0$ of $E$ with positive measure and $g\in C^1(\R^m,\R^{2n+1})$ such that $g\vert_{E_0}=f\vert_{E_0}$ and the approximate differential of $f$ and the differential of $g$ coincide on $E_0$.
We choose $y_0\in E_0$ and notice that for a fixed $r_0>0$ sufficiently small, we have 
$\cL^m(B_{y_0,r_0}\cap E_0)>0$ and $\Sigma_0=g(B_{y_0,r_0})$ is an $m$-dimensional
embedded manifold of $\R^{2n+1}$.
By the properties of $g$ and the classical area formula, we have 
\[
\Sigma_1=f(B_{y_0,r_0}\cap E_0)=g(B_{y_0,r_0}\cap E_0)\subset\Sigma_0\cap\Sigma\quad\mbox{and}\quad \cH^m_{|\cdot|}(\Sigma_1)>0.
\]
Since (\ref{eq_horizn}) does not hold on $E_0$, for any $y\in B_{y_0,r_0}\cap E_0$, we have 
$T_{f(y)}\Sigma_0\not\subset H_y\H^n$, therefore
\[
\tau_{\Sigma_0, \cV}(f(y))\neq0,
\]
where we have used the notation $\tau_{\Sigma_0, \cV}(x)$ with $x\in\Sigma_0$ to indicate
the {\em vertical tangent $p$-vector} to $\Sigma_0$ at $x$, see \cite[Definition~2.14]{Mag11}.
This $m$-vector vanishes exactly at those points $x$ where $T_x\Sigma_0\subset H_x\H^n$,
see \cite[Proposition~3.1]{Mag11}. From both \cite{Mag6} and \cite{Mag11}, the spherical Hausdorff measure $\cS_d^{m+1}\res\Sigma_0$ is equivalent, up to geometric constants, to the measure $|\tau_{\Sigma_0,\cV}|\,\cH^m_{|\cdot|}\res\Sigma_0$, hence in particular $\cS^{m+1}_d(\Sigma_1)>0$, therefore
\[
\cH^{m+1}_d(\Sigma)\geq \cH^{m+1}_d(\Sigma_1)>0\,,
\]
so the proof is complete. 
\end{proof}

\subsection{Formal horizontality of some BV graphs}
Our previous arguments also allow us to establish a kind of ``generalized horizontal tangency'' of BV functions
whose graph satisfies the metric constraint \eqref{hormet2}, as explained in the introduction.
In fact, by the arguments in the proof of Theorem~\ref{H_dm},
it is not difficult to establish the following result.
\begin{Teo}\label{BVg}
Let $2\leq\alpha<3$ and let $g:(0,1)^2\to\R$ be a BV function such that its graph 
\[G=\{(x_1,x_2,g(x)):\; 0<x_1,x_2<1\} \quad\mbox{satisfies}\quad \cH^\alpha_d(G)<+\infty\,,
\]
where $d$ is the SR-distance of $\H^1$, identified with $\R^3$ by the coordinates associated to the
vector fields of \eqref{heisv}. Then the approximate gradient $\mbox{\rm ap}\:\nabla g$ 
almost everywhere satisfies 
\begin{equation}\label{gx}
\mbox{\rm ap}\,\nabla g(x)=(-x_2,x_1)\,.
\end{equation}
\end{Teo}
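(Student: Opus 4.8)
The plan is to reduce the statement to the setting of Theorem~\ref{H_dm} by transferring the metric constraint $\cH^\alpha_d(G)<+\infty$ into the pointwise horizontality equation \eqref{eq_horiz}, and then to argue by contradiction. First I would set $f=(x_1,x_2,g)\colon (0,1)^2\to\R^3$, which is a $BV$ mapping, and recall that a $BV$ function is approximately differentiable $\cL^2$-almost everywhere, so $\mbox{ap}\,\nabla g$ exists a.e.; correspondingly the approximate differential of $f$ exists a.e.\ and has the block form with first two components the identity, hence $f$ has approximate rank $2$ wherever it is approximately differentiable. Suppose now \eqref{gx} fails on a set $E\subset(0,1)^2$ of positive $\cL^2$-measure; equivalently, $\mbox{ap}\,\nabla g(x)\ne(-x_2,x_1)$ on $E$, which is exactly the statement that the horizontality equation $\der_k f^3=f^1\der_k f^2-f^2\der_k f^1$ (the components of \eqref{eq_horiz}) fails at every point of $E$. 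After discarding a null set we may assume $E$ consists of density points where $f$ is approximately differentiable with rank $2$, and, shrinking $E$ further via Lusin-type / Federer-type arguments as in the proof of Thearem~\ref{H_dm}, that $f|_E$ is Lipschitz.

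Next I would run verbatim the Whitney-extension step from the proof of Theorem~\ref{H_dm}: extend $f|_E$ to a Lipschitz map on $\R^2$, apply the Whitney extension theorem to obtain a subset $E_0\subset E$ of positive measure and $h\in C^1(\R^2,\R^3)$ with $h|_{E_0}=f|_{E_0}$ and $Dh=\mbox{ap}\,Df$ on $E_0$. Pick a density point $y_0\in E_0$ and a small ball $B=B(y_0,r_0)$ with $\cL^2(B\cap E_0)>0$ such that $\Sigma_0=h(B)$ is a $C^1$ embedded surface in $\R^3$. Because the horizontality equation fails on $E_0$ and $Dh=\mbox{ap}\,Df$ there, at every point $f(y)=h(y)$ with $y\in B\cap E_0$ we have $T_{f(y)}\Sigma_0\not\subset H\H^1$, hence the vertical tangent $2$-vector $\tau_{\Sigma_0,\cV}$ is nonzero along the positive-measure piece $h(B\cap E_0)$. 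By the density result relating $\cS^3_d\res\Sigma_0$ to $|\tau_{\Sigma_0,\cV}|\,\cH^2_{|\cdot|}\res\Sigma_0$ (from \cite{Mag6,Mag11}), together with the area formula giving $\cH^2_{|\cdot|}(h(B\cap E_0))>0$, we get $\cH^3_d(h(B\cap E_0))>0$. Since $h(B\cap E_0)=f(B\cap E_0)\subset G$, this forces $\cH^3_d(G)>0$, and monotonicity of Hausdorff measures in the exponent ($\alpha<3$) yields $\cH^\alpha_d(G)=+\infty$, contradicting the hypothesis.

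The main obstacle — the only place where genuine care is needed beyond citing Theorem~\ref{H_dm}'s proof — is the passage from ``$f\in BV$'' to ``$f|_E$ Lipschitz on a positive-measure set on which approximate and distributional differentials agree and have rank $2$''. For $W^{1,1}$ maps this is handled in \cite{Mag17}; for $BV$ one invokes the a.e.\ approximate differentiability of $BV$ functions (Federer, or Ambrosio–Fusco–Pallara) and a Lusin-type theorem to extract a closed set of positive measure where $f$ coincides with a Lipschitz map. One must also check that the approximate gradient of $g$ is the quantity that enters the blow-up/contact argument; this is automatic because the first two coordinates of $f$ are linear, so the rank and the vertical-tangent-vector computations depend only on $\mbox{ap}\,\nabla g$. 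Everything after this reduction is a line-by-line repetition of the final part of the proof of Theorem~\ref{H_dm}, so I would present it compactly, pointing to that proof rather than reproducing the density and area-formula estimates in full.
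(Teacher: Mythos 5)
Your proposal is correct and follows essentially the route the paper itself indicates: the paper proves Theorem~\ref{BVg} by re-running the Whitney-extension/vertical-tangent-vector part of the proof of Theorem~\ref{H_dm}, concluding that if \eqref{eq_horiz} (equivalently \eqref{gx}) failed on a set of positive measure then $\cH^3_d(G)>0$, contradicting $\cH^\alpha_d(G)<+\infty$ with $\alpha<3$. Your added care about the BV setting (a.e.\ approximate differentiability plus a Lusin-type Lipschitz approximation, and the observation that the rank-2 and tangency computations only involve $\mbox{ap}\,\nabla g$ since the first two components are linear) is exactly the adaptation needed, so the argument stands as written.
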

\begin{Rem}\rm 
As already mentioned in the introduction, the existence of BV functions that satisfy the assumptions 
of Theorem~\ref{BVg} with $\alpha=2$ has been proved by Z. M. Balogh, R. Hoefer-Isenegger and J. T. Tyson, 
\cite{BHIT}.
The existence of BV functions whose absolutely continuous part of the distributional gradient almost everywhere 
equals a vector field with nonvanishing curl is a special instance of a general result due to G. Alberti, \cite{Alberti91}.
\end{Rem}


\begin{thebibliography}{99}

\bibitem{Alberti91}{\sc G.~Alberti},
{\em A Lusin type theorem for gradients}, J. Funct. Anal. {\bf 100} n.1, 110-118, (1991).

\bibitem{Bal}{\sc Z.\,M.~Balogh},
{\em Size of characteristic sets and functions with prescribed
gradients}, J. Reine Angew. Math., {\bf 564}, 63-83, (2003).

\bibitem{BalTys05}{\sc Z.\,M.~Balogh, J.\,T.~Tyson},
{\em Hausdorff dimensions of self-similar and self-affine fractals
in the Heisenberg group}, Proc. London Math. Soc. (3) {\bf 91}, n.1,
153-183, (2005).

\bibitem{BHIT}{\sc Z.\,M.~Balogh, R.~Hoefer-Isenegger, J.\,T.~Tyson},
{\em Lifts of Lipschitz maps and horizontal fractals in the Heisenberg group},
Ergodic Theory Dynam. Systems, {\bf 26}, n.3, 621-651, (2006).

\bibitem{DJHLT}{\sc N.~DeJarnette, P.~Hajlasz, A.~Lukyanenko, J.\,T.~Tyson}
{\em On the lack of density of Lipschitz mappings in Sobolev spaces with Heisenberg group target},
preprint, 2011.   
  
\bibitem{FS82}{\sc G.\,B.~Folland, E.\,M.~Stein},
{\em Hardy Spaces on Homogeneous groups}, Princeton University Press, (1982).

\bibitem{FSSC6}{\sc B.~Franchi, R.~Serapioni, F.~Serra Cassano},
{\em Regular submanifolds, graphs and area formula in Heisenberg
groups}, Adv. Math., {\bf 211} , no.1, 152-203, (2007).

\bibitem{Geiges2008}{\sc H.~Geiges},
{\em An introduction to Contact Topology}, 
Cambridge University Press, (2008).

\bibitem{Gr1}{\sc M.~Gromov},
{\em Carnot-Carath\'eodory spaces seen from within}, in {\em Subriemannian Geometry}, 
Progress in Mathematics,
{\bf 144}. ed. by A.~Bellaiche and J.~Risler, Birkhauser Verlag, Basel, (1996).


\bibitem{HajTys}{\sc  P.~Haj\l asz, J.\,T.~Tyson},
{\em Sobolev Peano cubes}, Michigan Math.\ J., {\bf 56}, 687-702, (2008).

\bibitem{Hebey1999}{\sc E.~Hebey},
{\em Nonlinear Analysis on manifolds: Sobolev spaces and inequalities}, Courant Lecture Notes in Mathematics, 
Providence, (1999)

\bibitem{Mag6}{\sc V.~Magnani},
{\em Characteristic points, rectifiability and perimeter measure
on stratified groups}, J. Eur. Math. Soc., vol. 8, n.4, 585-609, (2006).

\bibitem{Mag11}{\sc V.~Magnani},
{\em Blow-up of regular submanifolds in Heisenberg groups and applications},
Cent. Eur. J. Math., vol. 4, n.1, 82-109, (2006).

\bibitem{Mag12A}{\sc V.~Magnani},
{\em Non-horizontal submanifolds and coarea formula},
J. Anal. Math., {\bf 106}, 95-127, (2008). 

\bibitem{Mag17}{\sc V.~Magnani}, 
{\em Nonexistence of horizontal Sobolev surfaces in the Heisenberg group},
Proc. Amer. Math. Soc., {\bf 138}, 1785-1791, (2010). 

\bibitem{Zie}{\sc W.\,P.~Ziemer},
{\em Weakly differentiable functions. Sobolev spaces and functions of bounded variation.} Graduate Texts in Mathematics, 120. Springer-Verlag, New York, 1989.
% xvi+308 pp.

\end{thebibliography}
\end{document}